\newif\ifcolorcomments
\newcommand{\allowcomments}[4]{
\newcommand{#1}[1]{\ifdraft{\ifcolorcomments{\textcolor{#4}{##1 --#3}}\else{\textsl{ ##1 \ --#3}}\fi}\else{}\fi}
}
\newtheorem{theorem}{Theorem}[section]
\newtheorem{lemma}[theorem]{Lemma}
\newtheorem{proposition}[theorem]{Proposition}
\newtheorem{corollary}[theorem]{Corollary}
\theoremstyle{definition}
\newtheorem{remark}[theorem]{Remark}
\newtheorem{conjecture}[theorem]{Conjecture}
\newcommand{\C}{\mathbb C}
\newcommand{\CC}{\mathcal C}
\newcommand{\CCc}{\mathcal{C}^{\circ}}
\newcommand{\DD}{\mathcal D}
\newcommand{\mfF}{\mathfrak{F}}
\newcommand{\sfF}{\mathsf{F}}
\newcommand{\KK}{\mathcal K}
\newcommand{\N}{\mathbb N}
\newcommand{\Q}{\mathbb Q}
\newcommand{\R}{\mathbb R}
\newcommand{\Z}{\mathbb Z}
\newcommand{\mfFc}{\mathfrak{F}^{\circ}}
\newcommand{\bfa}{\mathbf{a}}
\newcommand{\bfb}{\mathbf{b}}
\newcommand{\bfc}{\mathbf{c}}
\newcommand{\bfh}{\mathbf{h}}
\newcommand{\bfu}{\mathbf{u}}
\newcommand{\bfx}{\mathbf{x}}
\newcommand{\bfw}{\mathbf{w}}
\DeclareMathOperator{\real}{Re}
\DeclareMathOperator{\imag}{Im}
\renewcommand{\text}{\textup}
\newcommand{\NPC}[1]{\ignorespaces}
\renewcommand{\AA}{\mathcal A}
\DeclareMathOperator*{\Exact}{Exact}
\newif\ifdraft\drafttrue
\def\N{\mathbb N}
\def\Z{\mathbb Z}
\def\Q{\mathbb Q}
\def\R{\mathbb R}
\def\e{\boldsymbol\eta}
\newcommand*{\myDots}{\ifmmode\mathellipsis\else.\kern-0.07em.\kern-0.07em.\fi}
\allowcomments{\commumtaz}{MH}{Mumtaz}{green}
\allowcomments{\comnikita}{NS}{Nikita}{blue}
\allowcomments{\comgero}{GGR}{Gero}{red}
\newcommand {\ignore}[1] {}
\begin{document}

\title{Complex numbers with a prescribed order of approximation and Zaremba's conjecture}

\author[G. Gonz\'alez Robert]{Gerardo Gonz\'alez Robert}
\author[M. Hussain]{Mumtaz Hussain}
\author[N. Shulga]{Nikita Shulga}
\address{Department of Mathematical and Physical Sciences,  La Trobe University, Bendigo 3552, Australia. }
\email{G.Robert@latrobe.edu.au}
\email{m.hussain@latrobe.edu.au}
\email{n.shulga@latrobe.edu.au}


\date{}

\maketitle

\begin{abstract}
Given $b=-A\pm i$ with $A$ being a positive integer, we can represent any complex number as a power series in $b$ with coefficients in $\AA=\{0,1,\ldots, A^2\}$. We prove that, for any real $\tau\geq 2$ and any non-empty proper subset $J(b)$ of $\AA$, there are uncountably many complex numbers (including transcendental numbers) that can be expressed as a power series in $b$ with coefficients in $J(b)$ and with the irrationality exponent (in terms of Gaussian integers) equal to $\tau$. 
One of the key ingredients in our construction is the `Folding Lemma' applied to Hurwitz continued fractions. This motivates a Hurwitz continued fraction analogue of the well-known Zaremba's conjecture. We prove several results in support of this conjecture.

\end{abstract}

\section{Introduction}

Real numbers are commonly represented in integer base expansions. It is thus natural to consider a similar representation of complex numbers in terms of a Gaussian integer base. However, this proves to be a more intricate problem compared to its real number counterpart.  Specifically, suppose we have a Gaussian integer $b$  such that $|b|>1$ and let $\AA$ be a system of representatives $\pmod b$. When $\real(b)$ and $\imag(b)$ are coprime, we may consider $\AA=\{0,1,\ldots, |b|^2-1\}$ (see, for example, \cite[Theorem 3.10.4]{AlloucheShallit2003})). Under this assumption, K\'atai and Szab\'o showed that for each Gaussian integer $z$ there exists a unique sequence $(d_j)_{j\geq 0}$ in $\AA$ satisfying 
\[
\lim_{n\to \infty} d_j=0
\quad\text{ and }\quad
z = \sum_{j=0}^{\infty} d_j b^j
\]
if and only if $b=-A\pm i$ for some $A\in \N$ (see \cite[Theorem 1]{KataiSzabo1975} and \cite{PethoThuswaldner2018} for a generalisation). Furthermore, in the same paper, they proved under the same assumptions that for each complex number $\zeta$ there is a sequence $(d_j)_{j\in \Z}$ in $\AA$ satisfying
\[
\lim_{n\to-\infty} d_j=0
\quad\text{ and }\quad
\sum_{j\in \Z}^{l} d_jb^{-j}.
\]
These expansions lead to situations not encountered in the real case. For example, the boundary of the set 
\[
\left\{ \sum_{j\in \N} \frac{d_j}{b^j}: \forall j\in \N \; \quad d_j\in \{0,1,\ldots, A^2\} \right\}
\]
exhibits a fractal structure with the Hausdorff dimension in the interval $(1,2)$ (see \cite{AkiyamaThuswaldner2004, Gilbert1986}).

In this paper, we restrict assume that $b=-A\pm i$ for some $A\in \N$.
By $J(b)$ we mean a proper subset of $\{0,1,\ldots, A^2\}$ and we define the set
\[
\KK_{J(b)}:=
\left\{ \zeta=\sum_{j\in \N} d_j b^{-j}:  \  \ d_j\in J(b)  \text{ for all } j\in\N  \right\}.
\]
Although there are uncountably many complex numbers with more than one expansion in base $b$, we can easily find non-empty sets  $J(b)\subset \{0,1,\ldots, A^2\}$ for which $\KK_{J(b)}$ is a Cantor set. Our first goal is to study approximation properties of elements in $\KK_{J(b)}$. To this end, for each positive function $\Psi:\R_{\geq 1} \to \R_{>0}$, define the set of $\Psi$-approximable numbers to be
\[
\KK(\Psi)
:=
\left\{ \zeta\in \C: \left| \zeta - \frac{p}{q}\right| < \Psi(|q|) \text{ for infinitely many } \frac{p}{q}\in \Q(i)\right\}.
\]
The set of complex numbers exactly approximable to order $\Psi$ is defined to be
\[
\Exact(\Psi)
:=
\KK(\Psi)\setminus \bigcup_{k\in\N} \KK\left( \left(1 - \frac{1}{k+1}\right) \Psi\right).
\]
The irrationality exponent $\mu(\xi)$ of a complex number $\xi\in \C\setminus \Q(i)$ is given by
\[
\mu(\xi):=
\sup\left\{ \mu\in \R: \left| \xi - \frac{p}{q}\right| < \frac{1}{|q|^{\mu}} \text{ for infinitely many } \frac{p}{q}\in \Q(i)\right\}.
\]
The definitions of $\KK(\Psi)$ and $\mu(\xi)$ are obvious analogues of those for real numbers, where we consider approximations by numbers in $\Q$ rather than in $\Q(i)$. In this context, a famous problem by Mahler \cite[Section 2]{Mahler1984} is to determine how well can irrational numbers in the middle-third Cantor set $K$ be approximated by rationals. Expressed differently, he asked whether $K$ contains numbers with a prescribed irrationality exponent $\tau$. Levesley, Salp, and Velani \cite{LevesleySalpVelani2007} solved Mahler's question by showing that the set of real numbers in $K$ whose irrationality exponent is strictly larger than two, has a positive Hausdorff dimension. They also gave examples of numbers (in fact transcendental numbers) in $K$ with the irrationality exponent larger than $\frac{3+\sqrt 5}{2}$. Shortly afterward, Bugeaud \cite{BugeaudDACS2008} constructed numbers in $K$ with any prescribed irrationality exponent $\tau\geq 2$. In the same paper, Bugeaud proposed a stronger version of Mahler's problem: 

{\em Given a positive function $\Psi$, is $K\cap \Exact(\Psi)\neq \varnothing$ true?} 

In the complex plane, Bugeaud's strengthening of Mahler's problem can be extended as follows: 

{\em for $b$ and $J(b)$ as above and a positive function $\Psi$, is the set $\Exact(\Psi)\cap \KK_{J(b)}$ non-empty?} 

Our first result extends Bugeaud's answer to complex numbers for a slightly weaker problem.

\begin{theorem}\label{Te:Gero:01}
Let $b \in\Z[i]$ be of the form $b=-A\pm i$ with $A\in \N$, let $J(b)$ be any proper subset of $\{0,1,\ldots, A^2\}$, and let $\Psi:\R_{>1}\to \R_{\geq 0}$ be a function such that
 $x^2\Psi(x)\to 0$ as $x\to \infty$, and $x\mapsto x^2\Psi(x)$ is decreasing. Then, for any number $c\in (0, |b|^{-1})$, the set $\left( \KK(\Psi)\setminus \KK(c\Psi)\right) \cap K_{J(b)}$ is non-empty. Moreover, when $A\geq 2$, the intersection is uncountable. 
\end{theorem}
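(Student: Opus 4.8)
The plan is to construct a single $\zeta\in\KK_{J(b)}$ by an infinite iteration of the Folding Lemma for Hurwitz continued fractions, arranging that $\Psi$ is realised along a sparse sequence of convergents while every other Gaussian rational stays under control. I would start from a Gaussian rational $r_0=p_0/q_0\in\KK_{J(b)}\cap\Q(i)$, i.e.\ with an eventually periodic base-$b$ expansion all of whose digits lie in $J(b)$ (so $r_0$ has a finite Hurwitz continued fraction $[0;a_1,\dots,a_{n_0}]$ with $a_j\in\Z[i]$), and then build Gaussian rationals $r_k=p_k/q_k$ recursively by
$$
r_{k+1}\;=\;r_k\;+\;\frac{\varepsilon_k}{q_k^{2}\,t_k},\qquad \varepsilon_k\in\{\pm1,\pm i\},\quad t_k\in\Z[i],
$$
where at each step $t_k$ and $\varepsilon_k$ are chosen so that \textbf{(i)} the base-$b$ expansion of $r_{k+1}$ prolongs that of $r_k$ using only digits from $J(b)$, so that $r_{k+1}\in\KK_{J(b)}$; and \textbf{(ii)} $|t_k|$ is as large as (i) permits, subject to $|q_k|^{-2}|t_k|^{-1}\le\Psi(|q_k|)$, which remains solvable with $|q_k|\to\infty$ because $x\mapsto x^2\Psi(x)$ is decreasing to $0$.

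The Folding Lemma then identifies the Hurwitz continued fraction of $r_{k+1}$ with that of $r_k$, followed by the single partial quotient $t_k$, followed by the reversal of the negated partial quotients of $r_k$, up to the bounded corrections needed to bring a Hurwitz continued fraction into canonical form. Iterating, the sequence $(r_k)$ is Cauchy since $|r_{k+1}-r_k|=|q_k|^{-2}|t_k|^{-1}\to0$; because $\KK_{J(b)}$ is compact and the base-$b$ expansions of the $r_k$ stabilise digit by digit to a fixed $J(b)$-valued sequence, the limit $\zeta:=\lim_k r_k$ lies in $\KK_{J(b)}$. The same folding description yields the full Hurwitz continued fraction of $\zeta$: its only large partial quotients are the inserted $t_0,t_1,\dots$, at predictable indices, while every other partial quotient is bounded by a constant depending only on $b$ and $J(b)$; in particular $\zeta\notin\Q(i)$.

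From this structure the two halves of the conclusion follow by the convergent calculus for Hurwitz continued fractions. Each $r_k=p_k/q_k$ is a convergent of $\zeta$ with
$$
|\zeta-r_k|\;=\;\bigl(1+o(1)\bigr)\,|q_k|^{-2}|t_k|^{-1}\;<\;\Psi(|q_k|)
$$
for all large $k$, so $\zeta\in\KK(\Psi)$. Conversely, by the Legendre-type property of Hurwitz continued fractions any Gaussian rational $p/q$ with $|\zeta-p/q|$ appreciably smaller than $|q|^{-2}$ is (up to a bounded adjustment) a convergent $p_n/q_n$ of $\zeta$; if $p_n/q_n$ is not one of the $r_k$ then the next partial quotient is bounded, so $|\zeta-p_n/q_n|\gg|q_n|^{-2}\gg c\,\Psi(|q_n|)$ for $n$ large, using $x^2\Psi(x)\to0$; and if $p_n/q_n=r_k$ then condition (i) forces the admissible sizes $|q_k|^{2}|t_k|$ to be spaced by a multiplicative factor $\asymp|b|$ --- each extra base-$b$ digit multiplies the relevant scale by $|b|$ --- so the best $t_k$ allowed in (ii) still gives $|\zeta-r_k|\ge c\,\Psi(|q_k|)$ as soon as $c<|b|^{-1}$. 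Hence only finitely many Gaussian rationals beat $c\Psi$, i.e.\ $\zeta\notin\KK(c\Psi)$, and $\zeta\in\bigl(\KK(\Psi)\setminus\KK(c\Psi)\bigr)\cap\KK_{J(b)}$.

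For the uncountability when $A\ge2$ (so $|\{0,1,\dots,A^2\}|=A^2+1\ge5$), I expect that at each step there are at least two choices of $(t_k,\varepsilon_k)$ meeting (i)--(ii), since the proper subset $J(b)$ still leaves room to prolong the base-$b$ expansion in more than one way at the required scale; branching on these choices produces a binary tree of admissible sequences with pairwise distinct limits $\zeta$ (their base-$b$ expansions eventually diverge), hence $2^{\aleph_0}$ points of the intersection, and --- since the algebraic numbers are countable and $\mu(\zeta)>2$ forces transcendence by the complex Roth theorem --- uncountably many transcendental ones whenever $\Psi$ decays faster than $x^{-2-\delta}$. The main obstacle is precisely conditions (i)--(ii): showing that at each scale (or at least along a sufficiently sparse subsequence of scales) there exists --- and, for the uncountability, that there exist at least two --- folding parameters $t_k$ whose insertion keeps the base-$b$ digits inside $J(b)$ without polluting the Hurwitz continued fraction of $r_{k+1}$ with uncontrolled large partial quotients. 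This is exactly where a Zaremba-type statement for Hurwitz continued fractions is natural, and the argument must either invoke the partial results towards the Hurwitz--Zaremba conjecture proved later in the paper or exploit the freedom to thin out the sequence of scales $|q_k|$ so that only existence along a subsequence is required.
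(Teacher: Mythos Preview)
Your overall strategy---iterate the Folding Lemma, tune each folding parameter $t_k$ to $\Psi$, and read off the convergent structure---is exactly the paper's. But you have not identified the single choice that makes your conditions (i)--(ii) and the HCF-validity issue evaporate simultaneously: take the seed $r_0=1/b^{v}$ with $|b|^{v}\ge\sqrt8$ and take \emph{every} folding parameter to be a power of $b$, say $t_k=b^{u_k}$. Then $q_k=b^{v_k}$ with $v_k=u_k+2v_{k-1}$, so $r_{k+1}-r_k=\pm b^{-v_{k+1}}$; since $v_{k+1}>2v_k$, the base-$b$ expansion of $r_{k+1}$ is that of $r_k$ with a single new nonzero digit placed far beyond the old support, and (i) is automatic. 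At the same time every partial quotient of the folded word is $\pm b^{m}$ with $|b|^{m}\ge\sqrt8$, so Proposition~\ref{Prop:FullCyls} certifies that the folded word is a genuine HCF---there are no ``bounded corrections to bring into canonical form'' to justify. In particular, Zaremba-type input is \emph{not} used in the proof of Theorem~\ref{Te:Gero:01}; that suggestion is a red herring. With general $t_k\in\Z[i]$ your condition (i) is essentially hopeless, because $\varepsilon_k/(q_k^2 t_k)$ then has an eventually periodic (not finite) base-$b$ expansion which will corrupt earlier digits.

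There is also a gap in your $\KK(c\Psi)$ argument. The claim that between successive $t_k$'s ``every other partial quotient is bounded by a constant'' is false: after $m$ foldings the reflected block contains $\pm t_0,\ldots,\pm t_{m-2}$, so for indices $j$ in the $m$th block one only has $|c_j|\le |b|^{u_{m-1}}$ (this is Proposition~\ref{Prop:Gero:Proof}\ref{Prop:Gero:Proof:02}). The paper compensates by choosing $u_{m+1}$ as the \emph{minimal} integer with $|b|^{u_{m+1}+2v_m}\Psi(|b|^{v_m})>2$; minimality forces $|b|^{v_{m+1}}\Psi(|b|^{v_m})\in(2,2|b|]$, and combining this with $|q_{j-1}|\ge|b|^{v_{m-1}}$ and the monotonicity of $x^{2}\Psi(x)$ gives
\[
\bigl(|b|^{u_{m-1}}+1\bigr)\,|q_{j-1}|^{2}\Psi(|q_{j-1}|)\;\le\;2\bigl(|b|^{u_{m-1}}+1\bigr)|b|^{\,1-u_m}\;\lesssim\;|b|,
\]
which is precisely where the threshold $c<|b|^{-1}$ enters. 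Your ``spacing by $|b|$'' heuristic is aiming at this minimality, but the bound on the non-folding convergents must go through $|c_j|\le|b|^{u_{m-1}}$, not through an absolute constant. For the uncountability when $A\ge2$, the paper realises your branching idea concretely by interleaving extra foldings with exponents in $\{1,2\}$.
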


The set $\Exact(\Psi)$ in the complex case was considered by He and Xiong \cite[Theorem 1.2]{HeXiong2022}. They calculated its Hausdorff and packing dimensions for any $\Psi$ satisfying the hypothesis of Theorem \ref{Te:Gero:01}. As stated above, in the real case, the exact approximation sets were comprehensively studied by Bugeaud \cite{MR2006007, BugeaudDACS2008, MR2439605} under similar asymptotic conditions on $\Psi$.

The theory of Hurwitz continued fractions tells us that $\mu(\xi)\geq 2$ for all $\xi\in\C\setminus\Q(i)$ (see statement \ref{Propo:HCFApproximation:03} in Proposition \ref{Propo:HCFApproximation}). Now by considering $\tau\geq 2$, $\Psi(x)= x^{-\tau} (\log(1+x))^{-1}$, and $J(b)=\{0,1\}$, Theorem \ref{Te:Gero:01} implies that $\KK_{J(b)}$ contains irrational numbers $\xi$ such that $\mu(\xi)=\tau$. The proof of Theorem \ref{Te:Gero:01} gives us some of those numbers $\xi$ explicitly, which we can formulate in the following way.

\begin{theorem}\label{Te:Gero:02}
Let $b$ be a Gaussian integer of the form $b=-A \pm i$ with $A\in \N_{\geq 2}$. For each $\lambda>0$ define
\[
n_0(\lambda,\tau)
:=
1 + \max\left\{\ 0, \left\lfloor \frac{\log(3/\lambda)}{\log \tau} \right\rfloor, \frac{\log 3}{\log |b|} \right\}.
\]
Then, the complex number
\[
\xi(\tau,\lambda)
=
\sum_{n\geq n_0(\lambda,\tau)} \,\frac{1}{b^{\lfloor \lambda\tau^n \rfloor}}\in K_{\{0,1\}}
\] 
has irrationality exponent $\mu(\xi(\tau, \lambda))=\tau$. Furthermore, when $\tau=2$, the numbers $\xi(\tau,\lambda)$ are badly approximable. 
\end{theorem}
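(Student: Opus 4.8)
The plan is to reduce the statement about the irrationality exponent of $\xi(\tau,\lambda)$ to a question about the growth of Hurwitz-continued-fraction denominators, and then to compute these denominators explicitly using the Folding Lemma. Write $\xi = \xi(\tau,\lambda)$ and $n_k = \lfloor \lambda\tau^{n_0+k}\rfloor$ for $k\geq 0$, so that $\xi = \sum_{k\geq 0} b^{-n_k}$. The partial sums $P_k/Q_k := \sum_{j=0}^{k} b^{-n_j}$ (in lowest terms, $Q_k$ a power of $b$ up to units, so $|Q_k|=|b|^{n_k}$) are excellent rational approximations: one computes directly that $|\xi - P_k/Q_k| \asymp |b|^{-n_{k+1}}$, and since $n_{k+1} = \lfloor\lambda\tau^{n_0+k+1}\rfloor \approx \tau\, n_k$, this gives $|\xi - P_k/Q_k| \approx |Q_k|^{-\tau}$. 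This already forces $\mu(\xi)\geq\tau$. For the reverse inequality one must show that \emph{no} Gaussian rational does essentially better than the $P_k/Q_k$; the clean way to do this is to identify the $P_k/Q_k$ with convergents of the Hurwitz continued fraction of $\xi$ and invoke the standard approximation properties (Proposition \ref{Propo:HCFApproximation}), together with the gap estimate $n_{k+1}/n_k \to \tau$ to control the ratio $\log|Q_{k+1}|/\log|Q_k|$.

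The technical heart is therefore to show that the $Q_k$ (powers of $b$) are, up to bounded-index interpolation, the Hurwitz continued fraction denominators of $\xi$, and this is exactly where the Folding Lemma enters. Starting from the HCF expansion of $P_0/Q_0 = b^{-n_0}$ — which is a short, explicit finite expansion — the Folding Lemma lets one pass from the HCF of $P_k/Q_k$ to that of $P_{k+1}/Q_{k+1} = P_k/Q_k + b^{-n_{k+1}}$ by appending a controlled block of partial quotients (a "fold" of the previous block, with a sign flip and a new large partial quotient of size $\asymp |b|^{\,n_{k+1}-2n_k}$ or so). Iterating, one obtains the full HCF of $\xi$ as a concatenation of these blocks, and crucially the \emph{denominators} $q_m$ of the HCF interlace the $|Q_k|=|b|^{n_k}$ with multiplicative gaps that stay bounded (because within each folded block the partial quotients are uniformly bounded, coming from the previous block, except for the single large one whose size is prescribed). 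Feeding $\limsup \log q_{m+1}/\log q_m$ into the formula $\mu(\xi) = 2 + \limsup \frac{\log q_{m+1}}{\log q_m}$-type identity (again from Proposition \ref{Propo:HCFApproximation}) yields $\mu(\xi)=\tau$ after a short computation with the floor functions.

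For the case $\tau=2$: here $n_{k+1}-2n_k = \lfloor 2\lambda 2^{n_0+k}\rfloor - 2\lfloor\lambda 2^{n_0+k}\rfloor$ is bounded (it is $0$ or $1$), so the "large" partial quotient in each folded block is in fact of bounded size; combined with the fact that the rest of each block is a fold of the previous one — whose entries are, inductively, bounded — \emph{all} partial quotients of the HCF of $\xi$ are uniformly bounded. By the Hurwitz-continued-fraction characterisation of badly approximable complex numbers (bounded partial quotients $\iff$ $\xi\in\mathrm{BA}$, cf. Proposition \ref{Propo:HCFApproximation}), $\xi(\,2,\lambda)$ is badly approximable.

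The main obstacle I expect is bookkeeping in the Folding Lemma step: the Hurwitz algorithm is not as forgiving as the real Gauss map — partial quotients lie in a restricted region of $\Z[i]$, and the "fold" of a block is only valid after a normalisation (possibly absorbing units and merging/splitting boundary partial quotients) to land back in the admissible Hurwitz alphabet. One must check that this normalisation does not destroy the uniform bound on the non-large entries, and that the length of each block stays linear in $n_k$ so that the multiplicative denominator gaps are controlled. This is precisely the point at which an input about Hurwitz continued fractions behaving well under the Folding Lemma — the motivation for the Zaremba-type conjecture mentioned in the introduction — is needed, and it is plausible the authors establish the requisite admissibility as a separate lemma before proving Theorem \ref{Te:Gero:02}.
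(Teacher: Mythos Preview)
Your proposal is correct and follows essentially the same route as the paper: the Folding Lemma builds the HCF of $\xi$ block by block, the partial sums give $\mu\geq\tau$, and the HCF estimate $|\xi-p_n/q_n|\asymp 1/(|q_n|^2|c_{n+1}|)$ (Lemma~\ref{Le:Proof:Gero:01}) yields $\mu\leq\tau$ once the partial quotients in each block are controlled. One small correction worth flagging: the entries in the $k$-th folded block are \emph{not} uniformly bounded---the fold copies all earlier ``large'' entries $\pm b^{u_j}$, $j<k$, into the new block---but their maximum there is $|b|^{u_{k-1}}$, and the paper verifies directly that $|b|^{u_{k-1}}\lesssim |q_j|^{\tau-2}$ for $j$ in that block (this reduces to $(\tau-1)(\tau-2)\geq 0$); for $\tau=2$ the merge identity \eqref{foldingforx=1}, which you anticipated as a ``normalisation'', is indeed invoked when the fold uses $x=b^0=1$.
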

\begin{corollary}\label{Coro:PrescribedExponent}
Let $\tau\geq 2$ be arbitrary. If $b=-A\pm i$ for some $A\in\N$ and $J(b)=\{0,1\}$, then $\KK_{J(b)}$ contains uncountably many complex numbers with irrationality exponent $\tau$.
\end{corollary}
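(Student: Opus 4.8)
The plan is to read off the irrationality exponent from the constructions already in place and then do the (modest) remaining work of producing \emph{uncountably many} pairwise distinct numbers.

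First I would settle the exponent. Fix $\tau\geq 2$ and set $\Psi(x)=x^{-\tau}\bigl(\log(1+x)\bigr)^{-1}$, so that $x^2\Psi(x)=x^{2-\tau}\bigl(\log(1+x)\bigr)^{-1}\to 0$ and $x\mapsto x^2\Psi(x)$ is decreasing on $\R_{>1}$. When $A\geq 2$ the set $\{0,1\}$ is a proper subset of $\{0,1,\ldots,A^2\}$, so Theorem~\ref{Te:Gero:01} applies with this $J(b)$ and any $c\in(0,|b|^{-1})$ and shows that $\bigl(\KK(\Psi)\setminus\KK(c\Psi)\bigr)\cap\KK_{\{0,1\}}$ is uncountable; any $\xi$ in it has $\mu(\xi)=\tau$, since membership in $\KK(\Psi)$ gives $|\xi-p/q|<|q|^{-\tau}$ infinitely often (so $\mu(\xi)\geq\tau$), while $\xi\notin\KK(c\Psi)$ gives $|\xi-p/q|\geq c|q|^{-\tau}\bigl(\log(1+|q|)\bigr)^{-1}>|q|^{-\tau-\varepsilon}$ for every $\varepsilon>0$ and all large $|q|$ (so $\mu(\xi)\leq\tau$). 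This already proves the corollary for $A\geq 2$. For $A=1$ the digit set \emph{is} $\{0,1\}$, so Theorem~\ref{Te:Gero:01} is unavailable, and I would instead rely on the explicit numbers of Theorem~\ref{Te:Gero:02}; see the last paragraph.

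Next, the cardinality, by an explicit injective family that works for every $A$. Fix a small $\delta>0$ and choose $N'\in\N$ large (depending only on $|b|$, $\tau$, $\delta$), and for $\lambda\in(\delta,2\delta)$ put
\[
\zeta_\lambda:=\sum_{n\geq N'} b^{-\lfloor\lambda\tau^n\rfloor}.
\]
Taking $N'\geq n_0(\lambda,\tau)$ for every $\lambda\in(\delta,2\delta)$, which is possible because $n_0(\cdot,\tau)$ is bounded on this interval, and $N'$ large enough that the exponents $\lfloor\lambda\tau^n\rfloor$, $n\geq N'$, are strictly increasing, one gets $\zeta_\lambda\in\KK_{\{0,1\}}$, and $\zeta_\lambda$ differs from $\xi(\tau,\lambda)$ of Theorem~\ref{Te:Gero:02} by a finite sum of powers of $b^{-1}$, hence by an element of $\Q(i)$; since $\mu(\cdot)$ is invariant under adding elements of $\Q(i)$, Theorem~\ref{Te:Gero:02} yields $\mu(\zeta_\lambda)=\tau$. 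For injectivity, take $\delta<\lambda<\lambda'<2\delta$ and let $m\geq N'$ be least with $a_m:=\lfloor\lambda\tau^m\rfloor\neq\lfloor\lambda'\tau^m\rfloor=:a_m'$ (such $m$ exists since $(\lambda'-\lambda)\tau^n\to\infty$), so $a_m<a_m'$; cancelling equal initial terms,
\[
\zeta_\lambda-\zeta_{\lambda'}=\bigl(b^{-a_m}-b^{-a_m'}\bigr)+\sum_{n>m}\bigl(b^{-\lfloor\lambda\tau^n\rfloor}-b^{-\lfloor\lambda'\tau^n\rfloor}\bigr).
\]
The first term has modulus at least $(1-|b|^{-1})|b|^{-a_m}$, while, since $\tau\geq 2$ forces $\lfloor\lambda\tau^n\rfloor\geq 2^{n-m}a_m$ for $n>m$, the tail has modulus at most $2(1-|b|^{-1})^{-1}|b|^{-2a_m}$; because $a_m\geq\lfloor\delta\tau^{N'}\rfloor$ can be made arbitrarily large by enlarging $N'$, the first term dominates and $\zeta_\lambda\neq\zeta_{\lambda'}$. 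Thus $\{\zeta_\lambda:\lambda\in(\delta,2\delta)\}$ is an uncountable subset of $\KK_{\{0,1\}}$ each of whose elements has irrationality exponent $\tau$.

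The main obstacle is the case $A=1$: there $\{0,1\}$ is the full digit set, so neither Theorem~\ref{Te:Gero:01} nor the \emph{statement} of Theorem~\ref{Te:Gero:02} applies directly, and the step $\mu(\zeta_\lambda)=\tau$ must be re-justified. I expect this to be routine — one checks that the proof of Theorem~\ref{Te:Gero:02} (the Hurwitz continued fraction expansion of the lacunary series together with the Folding Lemma) never uses $A\geq 2$, so it still delivers $\mu(\xi(\tau,\lambda))=\tau$ for $A=1$; failing that, one re-derives $\mu(\xi(\tau,\lambda))\leq\tau$ directly for $A=1$, the reverse inequality being immediate from the spacing of the exponents $\lfloor\lambda\tau^n\rfloor$. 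Everything else — the separation estimate above and the passage between $\KK(\Psi)$-membership and the value of $\mu$ — is routine.
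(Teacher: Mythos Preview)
Your first paragraph is exactly the paper's argument for $A\geq 2$: apply Theorem~\ref{Te:Gero:01} with $\Psi(x)=x^{-\tau}(\log(1+x))^{-1}$ and read off $\mu(\xi)=\tau$ from membership in $\KK(\Psi)\setminus\KK(c\Psi)$. Your explicit family $\{\zeta_\lambda\}$ built from Theorem~\ref{Te:Gero:02} is a correct alternative for $A\geq 2$, and the injectivity estimate is fine; it is simply more work than needed given that Theorem~\ref{Te:Gero:01} already supplies uncountability.

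The one genuine gap is $A=1$, which you correctly flag but do not close. Here the paper takes a completely different and much cheaper route, recorded in the Remark immediately after the Corollary: when $A=1$ one has $\KK_{\{0,1\}}=\C$ by K\'atai--Szab\'o \cite[Theorem 2]{KataiSzabo1975}, so the statement reduces to the (classical) fact that $\C$ contains uncountably many numbers of any prescribed irrationality exponent $\tau\geq 2$. There is thus no need to revisit the proof of Theorem~\ref{Te:Gero:02} or to check whether the Folding-Lemma construction survives for $|b|=\sqrt{2}$; that analysis is avoidable entirely.
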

\begin{remark}
When $A=1$, we have $\KK_{J(b)}=\C$ by \cite[Theorem 2]{KataiSzabo1975}. In this case, Corollary \ref{Coro:PrescribedExponent} follows trivially. 
\end{remark}

The proofs of Theorem \ref{Te:Gero:01} and Theorem \ref{Te:Gero:02} use the theory of Hurwitz continued fractions ({\bf HCFs}). The literature on HCFs is not yet fully developed akin to regular real continued fractions ({\bf RCFs}), however, there has been some progress in developing the metrical properties of HCFs to study sets of complex numbers defined by Diophantine approximation conditions. When proving Theorems \ref{Te:Gero:01} and \ref{Te:Gero:02}, we need Gaussian rationals of the form $r/b^v$ with $v\in \N$, whose HCFs satisfy certain properties. This is not an easy task when we impose an upper bound independent of $b$ on the partial quotients. However, the HCF analogue of the well-known Zaremba's conjecture (1971) asserts that we can always do it. 

\begin{conjecture}[Zaremba's Conjecture for Hurwitz continued fractions] There exists an absolute constant $\e\in\R$ with the following property:
 for any $b\in\Z[i]\backslash\{0, \pm 1, \pm i\}$, there exists $a\in\Z[i]\backslash\{0\}$ with $\gcd(a,b)=1$ and $a/b\in\mfF$, with $\mfF$ from \eqref{unit square}, such that if $a/b=[a_1,\ldots,a_n]_\C$ is a HCF, then $\max_i |a_i|~\leq~\e$.
\end{conjecture}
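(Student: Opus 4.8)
The natural strategy is to transport the circle-method machinery of Bourgain--Kontorovich (and its subsequent refinements) from $\Q$ to the imaginary quadratic setting $\Q(i)$. First reformulate the conjecture: for a fixed bound $N$, a nonzero $a\in\Z[i]$ with $\gcd(a,b)=1$, $a/b\in\mfF$ and $a/b=[a_1,\dots,a_n]_\C$ with $\max_i|a_i|\le N$ exists precisely when $b$ lies in the \emph{denominator set}
\[
\DD_N\df\bigl\{\,q_n\ :\ n\ge 1,\ (a_1,\dots,a_n)\text{ admissible},\ |a_i|\le N\ \forall i,\ p_n/q_n=[a_1,\dots,a_n]_\C\,\bigr\},
\]
so the conjecture says $\DD_N=\Z[i]\setminus\{0,\pm1,\pm i\}$ for some absolute $N$ (equivalently for some absolute $\e$). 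The first task is thermodynamic formalism for the Hurwitz--Gauss map restricted to the finite alphabet $\AA_N=\{a\in\Z[i]:|a|\le N\}$, carefully incorporating the forbidden-word constraints intrinsic to HCFs, and showing that the associated Hurwitz--Cantor limit set $\Lambda_N$ has $\hdim\Lambda_N=:\delta_N$ with $\delta_N\to 2$ as $N\to\infty$; this is the HCF analogue of Hensley's theorem on bounded real continued fractions. Fix $N$ large enough that $\delta_N$ exceeds the critical exponent $\delta_c\in(1,2)$ required below.

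\textbf{Counting and the circle method.} Admissible $\AA_N$-words with $|q_n|\asymp X$ are encoded by products of the matrices $\left(\begin{smallmatrix}a&1\\1&0\end{smallmatrix}\right)$, $a\in\AA_N$, inside $\mathrm{GL}_2(\Z[i])$, and they contribute $\asymp X^{2\delta_N}$ coprime pairs $(a,b)$ with $|b|\le X$ and $a/b\in\mfF$. Running the Hardy--Littlewood method over $\R^2$ against the lattice $\Z[i]$, the major arcs should produce the expected main term of order $X^2$, once one checks the \emph{absence of a local obstruction}: the semigroup generated by $\AA_N$ must surject onto $\mathrm{SL}_2(\Z[i]/\mathfrak m)$ for every modulus $\mathfrak m$, so that every residue class of $b$ is hit. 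The minor arcs are to be controlled by a uniform spectral gap for the congruence quotients of the thin group $\Gamma_N\le \mathrm{SL}_2(\Z[i])$ generated by the $\AA_N$-matrices, furnished by super-strong approximation over number fields in the style of Salehi Golsefidy--Varj\'u and Bourgain--Gamburd--Sarnak. The expected outcome is the HCF Zaremba statement \emph{on average}, namely $\#\bigl(\{b\in\Z[i]:|b|\le X\}\setminus\DD_N\bigr)\ll X^{2-\kappa}$ for some $\kappa=\kappa(N)>0$: density one with a power-saving exceptional set, together with unconditional verification for structured families (e.g. the bases $b=\beta^k$, where an explicit algebraic construction eliminates the exceptional set entirely).

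\textbf{The main obstacle} is the passage from this density-one statement to \emph{every} $b$ --- exactly the point at which the classical Zaremba conjecture over $\Q$ remains open. I see three avenues, none of which I expect to close the gap with present technology. (i) A bootstrap by concatenation: the \emph{Folding Lemma} for Hurwitz continued fractions --- the device already exploited in Theorems~\ref{Te:Gero:01}--\ref{Te:Gero:02} --- shows that if $b_1,b_2\in\DD_N$ then $b_1b_2\in\DD_{N'}$ for a slightly larger $N'$, obtained by splicing the two words together with a bounded ``bridging'' partial quotient; iterating would reduce the conjecture to Gaussian \emph{primes}, for which one would then need an independent input. (ii) Handling the sparse exceptional set directly: since the exceptions number $O(X^{2-\kappa})$, one could try to run a tailored nearest-Gaussian-integer continued-fraction algorithm on each exceptional $b$, exploiting the rarity of nearby exceptions to keep all partial quotients small --- but turning ``rarity'' into a usable constraint is the crux. (iii) Niederreiter-type algebraic arguments for highly structured $b$ (powers, prescribed factorisations), which by design cannot reach all of $\Z[i]$. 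A complete proof would almost certainly require a genuinely new idea at this juncture; accordingly, the realistic deliverable of this program is the density-one theorem together with the unconditional results for the special families above, which is precisely the body of ``results in support'' that follows this statement.
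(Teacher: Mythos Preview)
The statement you are addressing is a \emph{conjecture}, and the paper does not prove it. The paper explicitly labels it as such and then proves only a handful of special cases (Theorems~\ref{zarembafor-3+i}--\ref{zarembafor5}): denominators of the form $(-3\pm i)^n$, $(-2\pm i)^n$, $2^n$, $3^n$, $5^n$, each handled by repeated application of the Folding Lemma together with careful verification that the resulting sequences are valid HCFs. There is therefore no ``paper's own proof'' to compare your proposal against.

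You are entirely candid about this: your final paragraph states outright that a complete proof ``would almost certainly require a genuinely new idea,'' and that the realistic deliverable is a density-one result plus unconditional verification for structured families. That is an accurate assessment of the landscape, and indeed goes well beyond what the paper attempts --- the paper does not pursue the circle-method/thin-orbit program at all, nor does it prove any density-one statement. What the paper actually does corresponds to your avenue (iii): Niederreiter-type algebraic constructions for powers of small bases, executed via the Folding Lemma. Your avenues (i) and (ii), and the entire Bourgain--Kontorovich transport to $\Q(i)$, are not present in the paper in any form.

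So your document is not a proof and does not pretend to be one; it is a well-informed research proposal whose honest bottom line (``results in support'') matches the paper's actual content, while the ambitious analytic program you sketch is your own addition. If the intent was to supply a proof of the conjecture, that intent cannot be fulfilled --- the conjecture is open. If the intent was to situate the paper's partial results within a broader strategy, you have done that coherently, though you should be aware that none of the analytic machinery you describe (thermodynamic formalism for the restricted Hurwitz map, spectral gaps for thin subgroups of $\mathrm{SL}_2(\Z[i])$, local-global for the semigroup) is invoked or developed in the paper itself.
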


We prove Zaremba's conjecture in several special cases (in Section \ref{SEC:Zaremba}): for denominators of the form
\begin{itemize}
    \item  $b=(-3\pm i)^n,(-2\pm i)^n,\, n\geq1$ with $\e=3\sqrt2$, see Theorems \ref{zarembafor-3+i} and \ref{zarembafor-2+i}.
    \item $b=2^n,3^n,\, n\geq1$ with $\e=8$, see Theorems \ref{zarembafor2} and \ref{zarembafor3}.
    \item $b=5^n,\,n\geq1$ with $\e=7$, see Theorem \ref{zarembafor5}.
\end{itemize}


In the context of real numbers and regular continued fractions, Niederreiter proved Zaremba's Conjecture for powers of $2$ and $3$ with $\e=3$ in \cite{Niederreiter1986}. In recent years, Zaremba's conjecture has been extensively studied. We refer to \cite{MR3194813,  MR3361774, MR4145818, MR4202008, racicalzaremba} 
for some other results on Zaremba's conjecture.

Note that the bounds $\e$ which we got for the HCF analogue of Zaremba's conjecture are weaker than their real counterparts. We believe this should be the case by the nature of HCF. For RCFs, every sequence of natural numbers is a sequence of partial quotients of some number $x\in\R$. For a sequence of Gaussian integers and HCFs, analogous statement is not true. First of all, HCF partial quotients cannot take values from $\{0,\pm1,\pm i\}$. Furthermore, there are some non-trivial dependence relations between successive partial quotients,  which pose some restrictions disproportionally affecting partial quotients with small absolute values. As Zaremba's conjecture deals with very small partial quotients (in fact, in the real case it is conjectured that even $\e=2$ should work for large enough prime numbers $q$ as denominators), HCF expansions are naturally forced to take larger partial quotients more often.
For more details, see Section \ref{SEC:HCF} and Remark \ref{remark2n} below.

The paper is organised in the following manner. In Section \ref{SEC:Folding Lemma}, we recall of one of our key tools: Folding Lemma. Section \ref{SEC:HCF} contains notation, terminology, and basic facts on HCFs. Theorems \ref{Te:Gero:01} and \ref{Te:Gero:02} are proven in Section \ref{SEC:IrrationalityExp}. In Section \ref{SEC:Zaremba} we give a formal statement and proofs of our results around Zaremba's Conjecture for HCFs. Lastly, in Section \ref{validitycheck} we check that the continued fractions obtained by a repeated application of Folding Lemma are indeed HCFs.

\noindent{\bf Acknowledgments:} This research is supported by the Australian Research Council discovery project grant number 200100994. We are thankful to Johannes Schleischitz for posing this question during the Trends in Metric Number Theory Conference. 
\section{The continued fraction apparatus}\label{SEC:Folding Lemma}
A continued fraction is an object of the form
\begin{equation}\label{defgeneral}
a_0 + \cfrac{1}{a_1+\cfrac{1}{a_2+\cfrac{1}{a_3+\ddots}}},
\end{equation}
which can conveniently be denoted as $[a_0; a_1,\ldots,a_n,\ldots]$. When $a_0=0$, we will sometimes write $[a_1,\ldots,a_n,\ldots]:=[0; a_1,\ldots,a_n,\ldots]$. The coefficients $a_i$'s in different contexts can be very different things, for example, positive integers, non-zero integers, polynomials, complex numbers, and many others. Denote a finite truncation of \eqref{defgeneral} as
$$
\frac{p_n}{q_n}=[a_0;a_1,\ldots,a_n].
$$
It is well-known (see, for example, \cite[Theorem 1]{Khinchin_book}) that the sequences $(p_n)_{n\geq 0}$ and $(q_n)_{n\geq 0}$ satisfy the following recursion formulae
\begin{align}\label{Eq:Recurrence}
p_{-1}&=1, & p_0 &= a_0, & p_n&=a_n p_{n-1}+p_{n-2};\\
q_{-1}&=0,  & q_0 &= 1,  & q_n&=a_n q_{n-1}+q_{n-2}.
\end{align}
We refer to the sequences $(p_n)_{n\geq 0}$ and $(q_n)_{n\geq 0}$ as the sequences of numerators and denominators of $[a_0;a_1,a_2,\ldots]$ respectively. Next, we gather some properties of continued fractions that can be found in standard books, see for example \cite[Chapter 1]{Khinchin_book}.

\begin{proposition}
Take $n\in\N$ and a continued fraction $[a_0;a_1,\ldots, a_n]$, then
\begin{align*}
    q_np_{n-1}-q_{n-1}p_n &= (-1)^n, \\
    \frac{p_n \alpha+p_{n-1}}{q_n\alpha+q_{n-1}} &=[a_0;a_1,\ldots,a_n,\alpha  ] \text{ for a variable }\alpha\neq0, \\
    \frac{q_{n-1} }{q_n} &= [a_n,\ldots,a_1  ],\\
    x - \frac{q_{n-1} }{q_n}  &=[x;-a_n,\ldots,-a_1  ] \text{ for a variable }x.
\end{align*}
\end{proposition}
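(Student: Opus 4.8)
The plan is to establish the four identities one after another, each by a short induction on $n$ built on the recurrences \eqref{Eq:Recurrence}, and to treat $\alpha$ and $x$ as formal symbols (equivalently, indeterminates over the relevant field), so that all manipulations are purely algebraic. For the first identity I would use $n=0$ as the base case, where $q_0p_{-1}-q_{-1}p_0 = 1\cdot 1 - 0\cdot a_0 = 1 = (-1)^0$ thanks to the initial conditions $p_{-1}=1$, $q_{-1}=0$. For the inductive step, substitute $p_n=a_np_{n-1}+p_{n-2}$ and $q_n=a_nq_{n-1}+q_{n-2}$ into $q_np_{n-1}-q_{n-1}p_n$; the terms carrying $a_n$ cancel, leaving $q_{n-2}p_{n-1}-q_{n-1}p_{n-2} = -(q_{n-1}p_{n-2}-q_{n-2}p_{n-1}) = -(-1)^{n-1} = (-1)^n$.

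For the second identity I would use the observation that $[a_0;a_1,\ldots,a_n,\alpha] = [a_0;a_1,\ldots,a_{n-1},a_n+\alpha^{-1}]$, i.e.\ a continued fraction of length $n$ whose last partial quotient is $a_n+\alpha^{-1}$. Feeding this last entry into \eqref{Eq:Recurrence} shows its numerator equals $(a_n+\alpha^{-1})p_{n-1}+p_{n-2} = p_n + \alpha^{-1}p_{n-1} = \alpha^{-1}(\alpha p_n + p_{n-1})$, and likewise its denominator equals $\alpha^{-1}(\alpha q_n + q_{n-1})$; dividing gives $\frac{p_n\alpha+p_{n-1}}{q_n\alpha+q_{n-1}}$. (A direct induction on $n$, peeling off $a_0$ and applying the case $n-1$ to the tail $[a_1;\ldots,a_n,\alpha]$, works just as well.)

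For the third identity I would recall the paper's convention $[c_1,\ldots,c_k]:=[0;c_1,\ldots,c_k]$ and induct on $n$. The base case $n=1$ is $q_0/q_1 = 1/a_1 = [0;a_1]$. For the step, $q_n=a_nq_{n-1}+q_{n-2}$ gives $\frac{q_{n-1}}{q_n} = \frac{1}{a_n + q_{n-2}/q_{n-1}} = \frac{1}{a_n + [0;a_{n-1},\ldots,a_1]} = [0;a_n,a_{n-1},\ldots,a_1]$, where the induction hypothesis is used for $q_{n-2}/q_{n-1}$. The fourth identity then follows formally from the third together with the elementary fact that negating every partial quotient of a finite continued fraction negates its value (immediate induction on the length): indeed $[x;-a_n,\ldots,-a_1] = x + \frac{1}{-[a_n;a_{n-1},\ldots,a_1]} = x - \frac{1}{[a_n;a_{n-1},\ldots,a_1]} = x - [0;a_n,\ldots,a_1] = x - \frac{q_{n-1}}{q_n}$, the symbol $x$ remaining inert throughout.

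As a remark, identities (i) and (iii) can also be packaged at once through the matrix form $\bigl(\begin{smallmatrix}p_n & p_{n-1}\\ q_n & q_{n-1}\end{smallmatrix}\bigr) = \bigl(\begin{smallmatrix}a_0 & 1\\ 1 & 0\end{smallmatrix}\bigr)\cdots\bigl(\begin{smallmatrix}a_n & 1\\ 1 & 0\end{smallmatrix}\bigr)$: taking determinants yields (i), and transposing the product (each factor being symmetric, the order reverses) yields (iii). I do not expect any genuine obstacle here; the only points needing a little care are handling the initial data $p_{-1}=1$, $q_{-1}=0$ correctly in the base cases, keeping track of the reversal of indices in (iii)--(iv), and being explicit that $\alpha$ and $x$ are formal variables so that no convergence or domain issues arise.
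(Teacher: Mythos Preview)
Your proof is correct and complete. The paper, however, does not actually prove this proposition: it merely states that these identities ``can be found in standard books'' and cites Khinchin's \emph{Continued Fractions}, Chapter~1, without supplying any argument. So there is no paper proof to compare against; you have supplied the standard inductive proofs (and the matrix-product reformulation) that one would find in such a reference.

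One minor presentational point: in your argument for the fourth identity you wrote $[x;-a_n,\ldots,-a_1] = x + \frac{1}{-[a_n;a_{n-1},\ldots,a_1]}$, silently using the negation fact in the first step; it would be cleaner to write the intermediate expression $x + \frac{1}{[-a_n;-a_{n-1},\ldots,-a_1]}$ before invoking that $[-a_n;\ldots;-a_1] = -[a_n;\ldots;a_1]$, since this is where the ``negating all partial quotients negates the value'' claim actually enters. This is purely cosmetic; the mathematics is fine.
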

Next, we state the remarkable folklore result known as the ``Folding Lemma''.
\begin{lemma}[\cite{MR1149740}, Proposition 2]\label{folding} If $p_n/q_n=[a_0;a_1,\ldots, a_n]$, then for any nonzero $x$,we have
   \begin{equation}\label{folding:eq}
\frac{p_n}{q_n} + \frac{(-1)^n}{xq_n^2}= \left[  a_0; a_1,\ldots,a_n,x,-a_n,\ldots,-a_1        \right].
\end{equation}
\end{lemma}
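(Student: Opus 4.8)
The plan is to prove the Folding Lemma by a direct computation using the two algebraic identities already collected in the proposition above, namely the determinant relation $q_np_{n-1}-q_{n-1}p_n=(-1)^n$ and the ``tail-reversal'' formulas $q_{n-1}/q_n=[a_n,\ldots,a_1]$ and $x-q_{n-1}/q_n=[x;-a_n,\ldots,-a_1]$. The idea is that the continued fraction on the right-hand side of \eqref{folding:eq} is $[a_0;a_1,\ldots,a_n]$ with the extra block $x,-a_n,\ldots,-a_1$ appended, so it can be evaluated by substituting an appropriate complete quotient $\alpha$ into the formula $\frac{p_n\alpha+p_{n-1}}{q_n\alpha+q_{n-1}}=[a_0;a_1,\ldots,a_n,\alpha]$.

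First I would set $\alpha:=[x;-a_n,-a_{n-1},\ldots,-a_1]$, so that the right-hand side of \eqref{folding:eq} equals $[a_0;a_1,\ldots,a_n,\alpha]=\dfrac{p_n\alpha+p_{n-1}}{q_n\alpha+q_{n-1}}$. Next I would use the last identity of the proposition, $x-q_{n-1}/q_n=[x;-a_n,\ldots,-a_1]$, to rewrite $\alpha=x-\dfrac{q_{n-1}}{q_n}=\dfrac{xq_n-q_{n-1}}{q_n}$. Substituting this into $\dfrac{p_n\alpha+p_{n-1}}{q_n\alpha+q_{n-1}}$ and clearing the common factor $q_n$ from numerator and denominator gives
\[
\frac{p_n(xq_n-q_{n-1})+p_{n-1}q_n}{q_n(xq_n-q_{n-1})+q_{n-1}q_n}
=
\frac{xp_nq_n-(p_nq_{n-1}-p_{n-1}q_n)}{xq_n^2}
=
\frac{xp_nq_n+(q_np_{n-1}-q_{n-1}p_n)}{xq_n^2},
\]
where I have reorganised the numerator to expose the determinant. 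Applying $q_np_{n-1}-q_{n-1}p_n=(-1)^n$ turns this into $\dfrac{xp_nq_n+(-1)^n}{xq_n^2}=\dfrac{p_n}{q_n}+\dfrac{(-1)^n}{xq_n^2}$, which is exactly the left-hand side of \eqref{folding:eq}.

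One bookkeeping point to be careful about is that the convergent formula $[a_0;a_1,\ldots,a_n,\alpha]=\frac{p_n\alpha+p_{n-1}}{q_n\alpha+q_{n-1}}$ is being applied with a \emph{symbolic} complete quotient $\alpha$ (an element of the relevant field of rational functions, or simply a nonzero complex parameter), not with the usual positive real tail; since the recursion \eqref{Eq:Recurrence} is purely algebraic this is legitimate, and one only needs $\alpha\neq 0$ and $xq_n^2\neq 0$, which holds because $x\neq 0$ and $q_n\neq 0$. I expect no real obstacle here: the only mild subtlety is making sure the appended block $x,-a_n,\ldots,-a_1$ is interpreted so that $\alpha$ equals $[x;-a_n,\ldots,-a_1]$ rather than $[0;x,-a_n,\ldots,-a_1]$, i.e.\ that $x$ plays the role of the integer part of the complete quotient. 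Once the indexing is fixed, the proof is the short chain of substitutions above.
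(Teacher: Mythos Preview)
Your proof is correct and uses the same ingredients as the paper's: the determinant identity, the convergent formula $[a_0;a_1,\ldots,a_n,\alpha]=\frac{p_n\alpha+p_{n-1}}{q_n\alpha+q_{n-1}}$, and the tail-reversal identity $x-q_{n-1}/q_n=[x;-a_n,\ldots,-a_1]$. The only cosmetic difference is direction: the paper starts from $\frac{p_n}{q_n}+\frac{(-1)^n}{xq_n^2}$ and manipulates forward to reach the continued fraction, whereas you start from the continued fraction and work back; the chain of substitutions is otherwise identical.
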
 




Given the importance of this lemma in the proofs to follow,  we include its proof.
\begin{proof}
Certainly,
\begin{align*}
\frac{p_n}{q_n} + \frac{(-1)^n}{xq_n^2} 
&= \frac{p_n}{q_n}+\frac{q_np_{n-1}-q_{n-1}p_n}{x q_n^2} = \frac{xq_np_n+q_np_{n-1}-q_{n-1}p_n}{xq_n^2}\\
&=\frac{p_n(xq_n-q_{n-1})+q_n+p_{n-1}}{q_n(xq_n)}=\frac{p_n(x-q_{n-1}/q_n)+p_{n-1}}{xq_n+q_{n-1}+q_{n-1}} \\
&=\frac{p_n(x-q_{n-1}/q_n)+p_{n-1}}{q_n(x-q_{n-1}/q_n)+q_{n-1}}=\left[a_0;a_1,\ldots,a_n,x-\frac{q_{n-1}}{q_n} \right]\\ &=\left[  a_0; a_1,\ldots,a_n,x,-a_n,\ldots,-a_1        \right].
    \end{align*}
\end{proof}

\section{Hurwitz Continued Fractions}\label{SEC:HCF}
In this section we introduce Hurwitz continued fractions and some of their basic properties. Let $\lfloor \,\cdot\,\rfloor:\R\to\Z$ be the floor function. The \textbf{nearest Gaussian integer} to a given complex number $z\in \C$ is 
\[
[z]
:=
\left\lfloor \real(z) + \frac{1}{2}\right\rfloor
+
i \left\lfloor \imag(z) + \frac{1}{2}\right\rfloor.
\]
Let $\mfF$ be the the unit square centered given by
\begin{equation}\label{unit square}
\mfF
=
\{z\in \C: [z]=0\}
=
\left\{ z\in \C: -\frac{1}{2}\leq \real(z)< \frac{1}{2}, \; -\frac{1}{2}\leq \imag(z)< \frac{1}{2}\right\}.
\end{equation}
The \textbf{complex Gauss map}, $T:\mfF\to \mfF$, is the function given by
\[
\forall z\in \C
\quad
T(z) = 
\begin{cases}
z^{-1}- [z^{-1}], \text{ if } z\neq 0,\\
0, \text{ if } z=0.
\end{cases}
\]
Let $T^0$ be the identity map on $\mfF$ and put $T^{n}:=T\circ T^{n-1}$ for all $n\in\N$. Define $a_1:\mfF\setminus\{0\}\to\Z[i]$ by $a_1(z) \colon= [z^{-1}]$ and, while $T^{n-1}(z)\neq 0$, write $a_n(z):= a_1(T^{n-1}(z))$. The \textbf{Hurwitz Continued Fraction} of $z$ is $[a_1(z),a_2(z),\ldots]_\C$. The terms of the sequence $(a_n)_{n\geq 1}$ are the \textbf{partial quotients} of $z$. We extend the definition of HCFs to $\C$ by putting $a_0(z)=[z]$ and $a_n(z):=a_n(z-a_0)$, $n\in\N$. When introducing this continued fraction, Hurwitz showed that a complex number $\zeta$ belongs to $\C\setminus \Q(i)$ if and only if its sequence of partial quotients is infinite and, in this case,
\[
\zeta=\lim_{n\to\infty} \frac{p_n}{q_n}
\]
\cite{Hurwitz1887}. The partial quotients of an HCF may only take values in the set $\DD:=\Z[i]\setminus\{0,1,i,-1,-i\}$ (see for example \cite[Section 6]{DaniNogueira2014}). We will also use $\DD^{+}$ to denote the set of finite and non-empty sequences on $\DD$. We refer to ``$a_i\in \DD$'' as the trivial restriction. For any $n\in\N$ and each $\bfb=(b_1,\ldots,b_n)\in\DD^n$, the \textbf{cylinder} of level $n$ based on $\bfb$ is defined as
\[
\CC_n(\bfb):=
\{z \in\mfF: a_1(z)=b_1, \ldots, a_n(z)=b_n\}.
\] 
The \textbf{prototype set} based on $\bfb$ is $\mfF_n(\bfb):= T^n\left( \CC_n(\bfb) \right)$. By $\CC^{\circ}_n(\bfb)$ we mean the interior of $\CC_n(\bfb)$ and we define $\mfFc_n(\bfb)$ and $\mfFc$ analogously. We say that $\bfb$ is  \textbf{valid} (resp. \textbf{full}) if $\CC_n(\bfb)\neq\varnothing$ (resp. $\mfF_n^{\circ}( \bfb )=\mfF^{\circ}$). Let $\Omega(n)$ (resp. $\sfF(n)$) be the set of all valid (resp. full) words of length $n$, thus $\sfF(n)\subseteq \Omega(n)$. It is easy to check that there are exactly thirteen different non-empty sets of the form $\mfFc_n(\bfa)$ (see, for example, \cite[Section 2]{EiItoNakada2019}). We say that $\bfb=(b_1,\ldots, b_n)\in\Omega(n)$ is \textbf{reversible} if $\overleftarrow{\bfb}:=(b_n,\ldots, b_1)\in \Omega(n)$. 

In general, it might be hard to determine whether a finite sequence is valid or not. However, this occurs when the digits are large enough.

\begin{proposition}[{\cite[Proposition 4.3]{BugeaudGeroHussain2023} }]\label{Prop:FullCyls}  
Let $\bfa=(a_j)_{j\geq 1}$ be a sequence in $\DD$ such that $\min_j|a_j|\geq \sqrt{8}$.
    \begin{enumerate}[label=\normalfont(\arabic*)]
        \item \label{Prop:FullCyls:01} If $\mathbf{a}$ is finite, say $\bfa=(a_1,\ldots, a_n)$ for $n\in\N$,  then $\mathbf{a}\in \sfF(n)$; in particular, $\bfa\in \Omega(n)$. 
        \item \label{Prop:FullCyls:02} If $\bfa=(a_j)_{j\geq 1}$ is infinite, then $\zeta=[a_1,a_2,\ldots]_{\C}$ for some $\zeta\in \C\setminus\Q(i)$.
    \end{enumerate}
\end{proposition}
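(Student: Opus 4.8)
The plan is to prove both parts simultaneously by an inductive geometric argument showing that, whenever all partial quotients have absolute value at least $\sqrt 8$, each successive application of the complex Gauss map leaves the image covering the full cell $\mfF^\circ$. First I would record the elementary geometric fact underlying everything: if $|a|\ge \sqrt 8$, then $\{z^{-1}: z\in \mfF,\ z\ne 0\} \supseteq \{w : |w|\ge \sqrt 2\}$, since $\mfF$ is contained in the closed disc of radius $\tfrac{1}{\sqrt 2}$ about $0$, so $z^{-1}$ ranges over everything of modulus at least $\sqrt 2$. Hence the set of $z\in\mfF$ with $a_1(z)=a$ (for a fixed $a$ with $|a|\ge\sqrt 8$) is exactly the preimage under $z\mapsto z^{-1}$ of the translated square $a + \mfF$; and because $|a|\ge\sqrt 8$ forces $a+\mfF$ to lie entirely in $\{|w|\ge \sqrt 2\}$ (the nearest point of $a+\mfF$ to the origin is at distance $\ge |a| - \tfrac{1}{\sqrt2}\ge \sqrt8-\tfrac1{\sqrt2}>\sqrt 2$), this preimage is a nonempty topological disc whose image under $T$ (i.e.\ under $z^{-1}\mapsto z^{-1}-a$) is all of $\mfF^\circ$. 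This is the single-step statement $\mfF_1^\circ((a)) = \mfF^\circ$ for $|a|\ge\sqrt 8$, and it already gives $\CC_1((a))\ne\varnothing$.

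Next I would run the induction on $n$. Suppose $\bfa = (a_1,\dots,a_n)$ with all $|a_j|\ge\sqrt 8$ and that the claim holds for length $n-1$, so $\mfF^\circ_{n-1}(a_1,\dots,a_{n-1}) = \mfF^\circ$, i.e.\ $T^{n-1}$ maps $\CC_{n-1}(a_1,\dots,a_{n-1})$ onto $\mfF^\circ$. Since $|a_n|\ge\sqrt 8$, the single-step statement gives that $\{z\in\mfF^\circ : a_1(z)=a_n\}$ is a nonempty disc mapped onto $\mfF^\circ$ by $T$. Pulling this back through the homeomorphism $T^{n-1}\colon \CC^\circ_{n-1}(a_1,\dots,a_{n-1})\to\mfF^\circ$ (which is a branch of an iterated Möbius map, hence a homeomorphism onto its image), we get that $\CC_n(\bfa)$ is nonempty and $T^n(\CC_n(\bfa)) = T(\{z : a_1(z)=a_n\}) = \mfF^\circ$. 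This proves \ref{Prop:FullCyls:01}: $\bfa\in\sfF(n)\subseteq\Omega(n)$. For \ref{Prop:FullCyls:02}, given an infinite sequence $(a_j)_{j\ge1}$ in $\DD$ with $\inf_j|a_j|\ge\sqrt 8$, each truncation is valid by part \ref{Prop:FullCyls:01}, so the nested cylinders $\CC_n(a_1,\dots,a_n)$ are nonempty; I would check their diameters tend to $0$ (standard: the derivative of the inverse branch contracts because the $q_n$ grow, using $|q_n|\ge (|a_n|-1)|q_{n-1}|$ and $|a_n|-1\ge\sqrt8-1>1$, so $|q_n|\to\infty$ geometrically, and $\diam \CC_n \ll |q_n|^{-2}$), hence $\bigcap_n \CC_n$ is a single point $\zeta$. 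That $\zeta\notin\Q(i)$ and that its HCF is the prescribed sequence follows from Hurwitz's characterization quoted above (infinite sequence of partial quotients $\iff$ irrational), together with the fact that the cylinders were defined precisely by the digit constraints.

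The main obstacle, and the step that deserves the most care, is the single-step covering statement: verifying that for $|a|\ge\sqrt 8$ the set $\{z\in\mfF : a_1(z)=a\}$ is nonempty and, crucially, that $T$ maps it \emph{onto} the whole open square $\mfF^\circ$ — equivalently that the translated square $a+\mfF$ is entirely contained in the region $\mathbb C\setminus \overline{\mfF^{-1}}^{\,c}$... more plainly, that $a+\mfF\subseteq \{w : 0<|w|\le \infty\}\cap\{w : w^{-1}\in\mfF\}$-image, i.e.\ that inverting $\mfF$ produces a set containing $a+\mfF$. The constant $\sqrt 8$ is exactly what makes this work: one needs $|a|-\tfrac{\sqrt2}{2}$ to exceed the circumradius-related threshold $\sqrt 2$, and $\sqrt 8 - \tfrac{\sqrt2}{2} = \tfrac{3\sqrt2}{2}>\sqrt 2$. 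Since this is cited from \cite[Proposition 4.3]{BugeaudGeroHussain2023}, in the write-up I would either reproduce that geometric lemma or, more economically, invoke it directly; the inductive packaging in the two paragraphs above is then routine.
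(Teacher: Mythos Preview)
The paper does not prove this proposition; it is quoted from \cite[Proposition~4.3]{BugeaudGeroHussain2023} without argument, so there is no in-paper proof to compare against.

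Your overall strategy---establish the one-step covering $\mfFc_1(a)=\mfFc$ whenever $|a|\ge\sqrt8$, induct using that $T^{n-1}$ restricts to a homeomorphism from $\CCc_{n-1}(a_1,\ldots,a_{n-1})$ onto $\mfFc$, and then intersect nested cylinders for part \ref{Prop:FullCyls:02}---is the natural one and is sound in outline. However, the single-step geometric claim, which you yourself flag as the crux, contains a genuine error. You write that because $\mfF$ lies in the closed disc of radius $1/\sqrt2$, the set $\{z^{-1}:z\in\mfF,\ z\ne0\}$ \emph{contains} $\{w:|w|\ge\sqrt2\}$. The implication goes the wrong way: $|z|\le 1/\sqrt2$ gives $|z^{-1}|\ge\sqrt2$, i.e.\ $\{z^{-1}:z\in\mfF\}\subseteq\{|w|\ge\sqrt2\}$, which is useless here. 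Concretely, $w=3/2$ has $|w|>\sqrt2$ but $1/w=2/3\notin\mfF$, so your asserted inclusion is false.

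The fix is to use the \emph{inscribed} disc: the open disc $D(0,1/2)$ lies in $\mfFc$, hence $\{w:|w|>2\}\subseteq\{z^{-1}:z\in\mfFc\}$. The inequality you then need is
\[
|a|-\frac{\sqrt2}{2}\ \ge\ \sqrt8-\frac{\sqrt2}{2}\ =\ \frac{3\sqrt2}{2}\ >\ 2
\]
(not merely $>\sqrt2$), so that $a+\mfF\subseteq\{|w|>2\}$ and therefore every $w\in a+\mfF$ satisfies $w^{-1}\in\mfFc$; this gives $T(\CC_1(a))\supseteq\mfFc$. With this correction the induction and the nested-cylinder argument for part \ref{Prop:FullCyls:02} go through as you describe.
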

Consider $n\in\N$ and let $(a_1,\ldots, a_n)\in \DD^n$ be such that $\CCc_n(a_1,\ldots, a_n)\neq \varnothing$. When the assumptions of statement \ref{Prop:FullCyls:01} in Proposition \ref{Prop:FullCyls} are not met, there might exist Gaussian integers $a_{n+1}\in\DD$ for which $\CCc_{n+1}(a_1,\ldots, a_{n+1}) = \varnothing$ or even $\CC_{n+1}(a_1,\ldots, a_{n+1}) = \varnothing$. For example, $\CC_2(-1+2i,1+i)=\varnothing$, and $\CC_2(-2,1+4i)\neq \varnothing$ but $\CCc_2(-2,1+4i)= \varnothing$. Table \ref{Table:HensleyTable}, inspired by one in \cite[p. 75]{Hensley_book}, tells us how to choose $a_{n+1}$ so that $\CCc_{n+1}(a_1,\ldots, a_{n+1}) \neq \varnothing$. 
\begin{table}[h!]
\centering
\begin{tabular}{| c | c |} 
 \hline
 Conditions on $a_n$ & Maximal subset of $\DD$ containing $a_{n+1}$ \\ [0.5ex] 
 \hline
 $\max\{|\real(a_n)|,|\imag(a_n)|\}\geq 3$ & $\DD$ \\
 $1 + i$ & $\{a\in \DD:\imag(a_{n+1})<0<\real(a_{n+1})\}$ \\ 
  $2$ & $\{a\in \DD:\real(a_{n+1})\geq 0\}$ \\
 $2 + i$ & either $\DD\setminus\{ -1+i\}$  or $\{a\in \DD:\real(a)\geq 0\}$ \\
 $-1 + 2i$ & either $\DD\setminus\{ 1+i\}$  or $\{a\in \DD:\real(a)\leq 0\}$  \\ [1ex] 
 \hline 
\end{tabular} 
\caption{Non-trivial restrictions.}
\label{Table:HensleyTable}
\end{table}
The table remains true if we multiply both columns by the same integral power of $i$ or if we apply complex conjugation. 
Some care is needed in the last two lines, though. When $|a_n|=\sqrt{5}$, the open prototype set $\mfFc_{j}(a_1,\ldots, a_j)$ may have two different forms. For instance, for $2+i$ we have
 \[
\mfFc_2(2+2i, 2+ i)=\mfF_1^{\circ}(2+i)
\quad\text{ and }\quad
 \mfFc_2(-2 + i, 2+ i)=\mfF_1^{\circ}(2).
 \]
As a consequence, note that while $(2 + 2i, 2+i, -3+4i)$ is valid, $(-2 + 2i, 2+i, -3+4i)$ is not. 


In the next proposition, we collect some important properties of HCFs.
\begin{proposition}\label{Propo:HCFApproximation}
Take $z\in\mfF\setminus\{0\}$ and let $(p_n)_{n\geq 1}$, $(q_n)_{n\geq 1}$ be as in \eqref{Eq:Recurrence}. 
\begin{enumerate}[label=\normalfont(\arabic*)]
\item \label{Propo:HCFApproximation:01} \cite[Theorem 6.1]{DaniNogueira2014}. The sequence $(|q_n|)_{n\geq 0}$ is strictly increasing. 
\item \label{Propo:HCFApproximation:02} \cite[Proposition 3.3]{DaniNogueira2014}. If $z=[a_1,a_2,\ldots]_\C \in\mfF\setminus\Q(i)$, then
\[
\text{for all \ } n\in\N,
\quad
z= \frac{(a_{n+1}+[a_{n+2},a_{n+3},\ldots]_\C)p_{n}+p_{n-1}}{(a_{n+1}+[a_{n+2},a_{n+3},\ldots]_\C)q_{n}+q_{n-1}}.
\]
\item \label{Propo:HCFApproximation:03} \cite[Theorem 1]{Lakein1973}. For every $n\in\N$, we have
    \[
    \left| \zeta -  \frac{p_n}{q_n}\right| < \frac{1}{|q_n|^2}.
    \]
\item \label{Propo:HCFApproximation:04} \cite[Lemma 2.2]{HeXiong2022}. Take $p,  q\in \Z[i]$ with $q\neq 0$. If
    \[
    \left| \zeta -  \frac{p}{q}\right| < \frac{1}{4|q|^2},
    \]
    then $\frac{p}{q}=\frac{p_n}{q_n}$ for some $n\in\N$.
\end{enumerate}
\end{proposition}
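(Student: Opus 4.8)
All four items are standard facts about Hurwitz continued fractions already in the literature, so my plan is to indicate how each reduces to two ingredients: the M\"obius identity $\frac{p_n\alpha+p_{n-1}}{q_n\alpha+q_{n-1}}=[a_0;a_1,\ldots,a_n,\alpha]$ and the relation $p_{n-1}q_n-p_nq_{n-1}=(-1)^n$ (both from the Proposition preceding Lemma \ref{folding}), together with the geometry of the complex Gauss map, namely that each complete quotient $\zeta_{n+1}:=a_{n+1}+[a_{n+2},a_{n+3},\ldots]_{\C}=(T^nz)^{-1}$ satisfies $|\zeta_{n+1}|\ge\sqrt2$ since $T^nz\in\mfF$ and $\mfF$ sits in the closed disc of radius $1/\sqrt2$. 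For \ref{Propo:HCFApproximation:02} one applies the M\"obius identity with $\alpha=\zeta_{n+1}$, using Hurwitz's theorem (recalled above) that the infinite HCF of an irrational converges to it. For \ref{Propo:HCFApproximation:03}, subtracting $p_n/q_n$ from the formula in \ref{Propo:HCFApproximation:02} and applying $p_{n-1}q_n-p_nq_{n-1}=(-1)^n$ yields
\[
\zeta-\frac{p_n}{q_n}=\frac{(-1)^n}{q_n\bigl(q_n\zeta_{n+1}+q_{n-1}\bigr)},
\]
so it remains to show $|q_n\zeta_{n+1}+q_{n-1}|>|q_n|$, equivalently $\bigl|a_{n+1}+q_{n-1}/q_n+T^{n+1}z\bigr|>1$; since $|\zeta_{n+1}|\ge\sqrt2$ and $|q_{n-1}/q_n|<1$ only give the useless bound $>\sqrt2-1$, this genuinely needs Lakein's sharp estimate on how close the tail $T^{n+1}z\in\mfF$ can approach a Gaussian integer, and I would quote \cite[Theorem 1]{Lakein1973}.

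For \ref{Propo:HCFApproximation:01}, the recurrence alone gives only $|q_n|\ge\sqrt2\,|q_{n-1}|-|q_{n-2}|$, which is insufficient precisely when $|a_n|=\sqrt2$; the remedy is to pass to $q_n/q_{n-1}=a_n+q_{n-2}/q_{n-1}$ and control the reversed ratio $q_{n-2}/q_{n-1}=[a_{n-1},\ldots,a_1]_{\C}$ inside $\mfF$, the delicate small-$|a_n|$ cases being handled in \cite[Theorem 6.1]{DaniNogueira2014}, which I would cite. For \ref{Propo:HCFApproximation:04}, this is the complex analogue of Legendre's theorem: given $p/q$ in lowest terms with $|\zeta-p/q|<\tfrac14|q|^{-2}$, expand $p/q=[a_1,\ldots,a_k]_{\C}$ with convergents $p_j/q_j$ so that $p/q=p_k/q_k$ and $|q|=|q_k|$; were $p/q$ not a convergent of $\zeta$, comparing $|\zeta-p_k/q_k|$ and $|\zeta-p_{k-1}/q_{k-1}|$ through $|p_{k-1}q_k-p_kq_{k-1}|=1$ contradicts the hypothesis, the constant $\tfrac14$ (in place of the real $\tfrac12$) being exactly what the non-symmetry of $\mfF$ costs. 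I would cite \cite[Lemma 2.2]{HeXiong2022} for the details.

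The one genuinely delicate point, shared by \ref{Propo:HCFApproximation:01}, \ref{Propo:HCFApproximation:03} and \ref{Propo:HCFApproximation:04}, is the geometric input: controlling the position of the tail $T^{n+1}z$ (or of the reversed denominator ratio $q_{n-1}/q_n$) inside $\mfF$ relative to the Gaussian integer attached to $q_n$ or $q_{n+1}$. Quantifying this forces the analysis of the thirteen possible prototype shapes, equivalently the non-trivial restrictions in Table \ref{Table:HensleyTable}; that bookkeeping, rather than any conceptual difficulty, is the real work, and since it has already been carried out in the cited papers the proof of the Proposition amounts to assembling those references.
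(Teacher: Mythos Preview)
Your proposal and the paper agree in substance: the paper gives no proof of this Proposition at all---each item is simply attributed to a reference---so your plan of ultimately invoking \cite{DaniNogueira2014}, \cite{Lakein1973}, and \cite{HeXiong2022} is exactly the paper's approach. Your added commentary on the underlying mechanisms is helpful context rather than a different route.

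One imprecision in your sketch is worth flagging. In discussing \ref{Propo:HCFApproximation:01} you write that the remedy is to control $q_{n-2}/q_{n-1}=[a_{n-1},\ldots,a_1]_{\C}$ ``inside $\mfF$''. The identity $q_{n-2}/q_{n-1}=[a_{n-1},\ldots,a_1]$ is purely algebraic and holds as a formal continued fraction, but the subscript $\C$ and the phrase ``inside $\mfF$'' suggest this reversed word is a valid Hurwitz continued fraction. As the paper stresses in Remark \ref{remarkdiff}\eqref{remarkdiff2}, that is false in general: a valid $\bfa\in\Omega(n)$ need not have $\overleftarrow{\bfa}\in\Omega(n)$, and accordingly $q_{n-2}/q_{n-1}$ need not lie in $\mfF$. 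Dani and Nogueira do not assume it does; their argument tracks the ratios $q_{n-1}/q_n$ directly via a careful case analysis on the small partial quotients, which is precisely the ``delicate small-$|a_n|$ cases'' you allude to. So your conclusion (cite \cite[Theorem 6.1]{DaniNogueira2014}) is right, but the heuristic you offer for the proof shape is misleading.
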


When restricted to real numbers, HCFs partial quotients may take values in $\Z\setminus\{-1,0,1\}$. Hence, obviously, it may not coincide with the RCFs expansion. For reference, we state the next remark.

\begin{remark}\label{remarkdiff}
Let $\bfa=(a_1,\ldots,a_n)\in\Z[i]^n$. The following statements are true.
    \begin{enumerate}
        \item\label{remarkdiff1} 
  If $[a_1,\ldots,a_n]_\R$ is a RCF of a real number $x\in[0,1]$, then $[a_n,\ldots,a_1]_\R$ is also the RCF of some real number $y\in[0,1]$.

   \item\label{remarkdiff2}  If $\bfa\in\Omega(n)$, then in general it is not true that $\bfa$ is reversible. 

  \item\label{remarkdiff3}   If $\bfa\in\Omega(n)$ and for all $i$ one has $a_i\in\Z$, then $\bfa$ is reversible if and only if for every $2\leq k \leq n$ such that $|a_k| =2$, we have $a_{k-1}a_k>0$. 

  \item\label{remarkdiff4}  (Corollary of the previous statement)  If $\bfa\in\Omega(n)$ and for all $1\leq i \leq n$ we have $|a_i|\geq3,a_i\in\Z$, then $\bfa$ is reversible.
  \end{enumerate}
\end{remark}

The statement \eqref{remarkdiff3} of Remark \ref{remarkdiff} can also be seen as a corollary of Proposition 1.1 from \cite{MR3692894}, where Simmons investigated properties of  HCFs the of real numbers. We refer to his paper for more details.

\section{Proofs of Theorem \ref{Te:Gero:01} and Theorem \ref{Te:Gero:02}} \label{SEC:IrrationalityExp}
We start off by providing some estimates on the approximation by the $n$th convergent. Next, we construct a family of complex numbers depending on certain parameters using Folding Lemma. Finally, we obtain Theorem \ref{Te:Gero:01} and Theorem \ref{Te:Gero:02} by appropriately choosing the values of the parameters and checking that the conclusions are true.

Lakein \cite[Theorem 2]{Lakein1973} proved that Hurwitz continued fractions provide good Gaussian rational approximations to a given number $\zeta\in \C$ in the sense that, for any $n\in \N$,
\[
|q_n\zeta - p_n|
=
\min\left\{ |q'\zeta - p'|: p',q'\in\Z[i], \, |q'|\leq |q|\right\}.
\]
However, some good approximations might be overlooked by Hurwitz's algorithm (see \cite{LeVeque1952} for a short discussion and \cite[Section 2]{MR3692894}). In Theorem \ref{Te:Gero:01} and in Theorem \ref{Te:Gero:02} under the assumption $\tau>2$, we are considering the approximation function $\Psi(x)=x^{-\tau}$ satisfying $x^2\Psi(x)\to 0$ as $x\to \infty$. In particular, if $q\in \Z[i]$ has a large absolute value, then $|q|^2\Psi(|q|)< 4^{-1}$ and, for a given $\zeta\in \C$, 
 \[
 \left| \zeta - \frac{p}{q}\right|<\Psi(q)
\quad \text{ implies }\quad
 \left| \zeta - \frac{p}{q}\right|<\frac{1}{4|q|^2}
 \]
so, by Proposition \ref{Propo:HCFApproximation}, $p/q=p_n/q_n$ for some $n\in\N$. The case $\tau=2$ in Theorem \ref{Te:Gero:02} can also be studied using HCFs. Indeed, let $\zeta\in\C$ be given, then $\mu(\zeta)\geq 2$ in the view of Proposition \ref{Prop:Gero:Proof}. Whereas, it is known \cite{GeroPP-2020, Hines2019} that the Hurwitz partial quotients of $\zeta$ are bounded if and only if for some constant $c>0$ we have
\begin{equation}\label{Eq:BadDef}
\forall (p,q)\in \Z[i]\times (\Z[i]\setminus \{0\}), 
\quad
\left| \zeta - \frac{p}{q}\right| \geq \frac{c}{|q|^2}.    
\end{equation}

Hence, the irrationality exponent of such $\zeta$'s is two, that is, $\mu(\zeta)=2$. A complex number $\zeta\in\C\setminus\Q(i)$ satisfying \eqref{Eq:BadDef} is called \textbf{badly approximable}.

\begin{lemma}\label{Le:Proof:Gero:01}
For every $\zeta=[c_0;c_1,c_2,\ldots]_\C\in \C\setminus \Q(i)$ and for all $n\in \N$,  we have 
\[
\left(\frac{2}{2 + \sqrt{2}}\right)\,
\frac{1}{ |q_n|^2 \left( |c_{n+1}| + \frac{\sqrt{2}}{2}\right)}
\leq
\left| \zeta - \frac{p_n}{q_n}\right|
\leq 
\left(\frac{4}{2-\sqrt{2}}\right)\,
\frac{1}{|c_{n+1}||q_n|^2}.
\]
\end{lemma}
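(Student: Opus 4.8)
The strategy is to rewrite $\zeta - p_n/q_n$ using the standard continued-fraction identity and then bound all quantities in absolute value. By statement \ref{Propo:HCFApproximation:02} of Proposition \ref{Propo:HCFApproximation}, writing $z_{n+1} := a_{n+1} + [a_{n+2}, a_{n+3}, \ldots]_\C = a_{n+1} + T^{n+1}(z)$ (the ``complete quotient''), we have
\[
\zeta = \frac{z_{n+1} p_n + p_{n-1}}{z_{n+1} q_n + q_{n-1}},
\]
so subtracting $p_n/q_n$ and using $q_n p_{n-1} - q_{n-1} p_n = (-1)^n$ from the first proposition gives
\[
\zeta - \frac{p_n}{q_n} = \frac{p_{n-1} q_n - p_n q_{n-1}}{q_n(z_{n+1} q_n + q_{n-1})} = \frac{(-1)^{n+1}}{q_n(z_{n+1} q_n + q_{n-1})}.
\]
Taking absolute values, the task reduces to sandwiching $\left| z_{n+1} q_n + q_{n-1} \right| = |q_n| \cdot \left| z_{n+1} + q_{n-1}/q_n \right|$ between constant multiples of $|q_n|(|c_{n+1}| \pm \tfrac{\sqrt 2}{2})$.

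\textbf{Key steps.} First I would record the geometric facts specific to HCFs: since $T^{n+1}(z) \in \mfF$, the unit square of \eqref{unit square}, we have $|T^{n+1}(z)| \le \tfrac{\sqrt 2}{2}$ (half the diagonal); and since $a_{n+1} = [1/T^n(z)]$ with $T^n(z) \in \mfF$, we have $|T^n(z)| \le \tfrac{\sqrt 2}{2}$, hence $|z_{n+1}| = |1/T^n(z)| \ge \sqrt 2$. Likewise $1/z_{n+1} = T^n(z) \in \mfF \setminus \{0\}$. From $z_{n+1} = a_{n+1} + T^{n+1}(z)$ and the triangle inequality,
\[
|c_{n+1}| - \frac{\sqrt 2}{2} \;\le\; |z_{n+1}| \;\le\; |c_{n+1}| + \frac{\sqrt 2}{2}.
\]
Next I would control the ratio $q_{n-1}/q_n$. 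Here the cleanest route is the identity $q_{n-1}/q_n = [a_n, \ldots, a_1]_\C$ from the first proposition, combined with the fact that this number lies in $\mfF$ (it is a ``tail'' of a HCF-type expansion built from digits in $\DD$ — one can invoke that $[a_n,\dots,a_1]_\C$, read as a complex continued fraction with all partial quotients of modulus $\ge \sqrt 2$, converges into $\mfF$, e.g. via Proposition \ref{Prop:FullCyls} after noting the relevant digits, or more elementarily from $q_{n-1}/q_n = 1/(a_n + q_{n-2}/q_{n-1})$ and induction using $|q_k|$ strictly increasing from statement \ref{Propo:HCFApproximation:01}, which already gives $|q_{n-1}/q_n| < 1$; a small refinement gives $|q_{n-1}/q_n| \le \tfrac{\sqrt2}{2}$ or at worst a constant $<1$). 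Then
\[
\left| z_{n+1} + \frac{q_{n-1}}{q_n} \right| \;\le\; |z_{n+1}| + \left|\frac{q_{n-1}}{q_n}\right| \;\le\; |c_{n+1}| + \frac{\sqrt 2}{2} + \frac{\sqrt 2}{2}\cdot(\text{something} \le 1),
\]
giving the lower bound on $|\zeta - p_n/q_n|$ after reciprocating (the factor $\tfrac{2}{2+\sqrt 2}$ should fall out of a slightly careful version of this), and for the upper bound one writes $\left| z_{n+1} + q_{n-1}/q_n \right| \ge |z_{n+1}| - |q_{n-1}/q_n| \ge (|c_{n+1}| - \tfrac{\sqrt2}{2}) - \tfrac{\sqrt2}{2}$, but since this can be small when $|c_{n+1}|$ is near $\sqrt 2$, the better move is to factor out $z_{n+1}$: $\left| z_{n+1} + q_{n-1}/q_n \right| = |z_{n+1}|\,\left| 1 + \tfrac{1}{z_{n+1}}\cdot\tfrac{q_{n-1}}{q_n}\right| \ge |z_{n+1}|\,(1 - \tfrac{1}{\sqrt2}\cdot\tfrac{\sqrt2}{2}) = |z_{n+1}|/2 \ge |c_{n+1}|\,\cdot\tfrac{2-\sqrt2}{2\sqrt2}$-type bound, after using $|1/z_{n+1}| \le \tfrac{1}{\sqrt2}$ and $|q_{n-1}/q_n| \le \tfrac{\sqrt2}{2}$ and then $|z_{n+1}| \ge |c_{n+1}| - \tfrac{\sqrt2}{2} \ge $ a constant times $|c_{n+1}|$ since $|c_{n+1}| \ge \sqrt 2$. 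Reciprocating and multiplying by $|q_n|^{-2}$ yields the claimed two-sided estimate with the stated constants $\tfrac{2}{2+\sqrt2}$ and $\tfrac{4}{2-\sqrt2}$.

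\textbf{Main obstacle.} The genuine content is not the algebra but pinning down the bound $|q_{n-1}/q_n| \le \tfrac{\sqrt 2}{2}$ (or whatever constant makes the advertised numbers come out exactly). Monotonicity of $(|q_n|)$ only gives $|q_{n-1}/q_n| < 1$, which is too weak to produce the precise constants; one needs that $q_{n-1}/q_n = [a_n,\dots,a_1]_\C$ actually lands in $\mfF$, equivalently that reading the digits backwards produces a valid (or at least convergent) HCF-type tail. This is exactly the kind of reversibility subtlety flagged in Remark \ref{remarkdiff} and the $|a_n| = \sqrt5$ discussion around Table \ref{Table:HensleyTable}, so care is needed: either one argues directly that $q_{n-1}/q_n \in \mfF$ always holds (it does, since $q_{n-1}/q_n$ is what $T$ would read off from $p_n/q_n$ run backwards, and all HCF complete quotients' reciprocals lie in $\mfF$), or one absorbs a slightly worse universal constant into the statement's constants. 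I would also double-check the extreme case $|c_{n+1}| = \sqrt 2$ separately to confirm the lower constant $\tfrac{2}{2+\sqrt2}$ is attained/respected there, since that is where both inequalities are tightest.
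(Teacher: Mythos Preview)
Your overall strategy---compute $\zeta - p_n/q_n$ via the complete quotient $z_{n+1}=c_{n+1}+[c_{n+2},\ldots]_\C$ and then estimate---is exactly the paper's. But you have misidentified the obstacle and, as a result, your sketch does not quite close for the lower bound with the stated constant.

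The bound $|q_{n-1}/q_n|\le \sqrt{2}/2$ is \emph{not} needed, and in fact it need not hold: as you yourself observe, $(c_n,\ldots,c_1)$ may fail to be a valid HCF word, so there is no reason for $q_{n-1}/q_n$ to lie in $\mfF$. The paper uses only $|q_{n-1}/q_n|<1$, which is immediate from the strict monotonicity of $(|q_n|)$ in Proposition~\ref{Propo:HCFApproximation}\ref{Propo:HCFApproximation:01}. All of your discussion of reversibility is a red herring here.

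The reason $|q_{n-1}/q_n|<1$ already suffices for the exact constants is that one should factor out $z_{n+1}$ in \emph{both} directions, not just for the upper bound. Writing
\[
\left|\zeta-\frac{p_n}{q_n}\right|
=\frac{|[c_{n+1},c_{n+2},\ldots]_\C|}{|q_n|^2\,\bigl|1+\tfrac{q_{n-1}}{q_n}[c_{n+1},c_{n+2},\ldots]_\C\bigr|},
\]
the crucial product satisfies $\bigl|\tfrac{q_{n-1}}{q_n}\bigr|\cdot|[c_{n+1},\ldots]_\C|<1\cdot\tfrac{\sqrt 2}{2}$ (since $[c_{n+1},\ldots]_\C\in\mfF$), so the second factor in the denominator lies in $\bigl[\tfrac{2-\sqrt2}{2},\tfrac{2+\sqrt2}{2}\bigr]$, and the numerator is at least $1/(|c_{n+1}|+\tfrac{\sqrt2}{2})$. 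That is precisely where the constant $\tfrac{2}{2+\sqrt2}$ comes from. Your direct bound $|z_{n+1}+q_{n-1}/q_n|\le|z_{n+1}|+|q_{n-1}/q_n|$ gives a correct but \emph{different} inequality (of the shape $1/(|q_n|^2(|c_{n+1}|+\text{const}))$), and no ``slightly careful version'' of it will produce the multiplicative constant in the statement.

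For the upper bound the paper actually takes a slightly different path than you: it uses the triangle inequality through $p_{n+1}/q_{n+1}$ to get $|\zeta-p_n/q_n|<2/|q_nq_{n+1}|$, and then bounds $|q_{n+1}|=|c_{n+1}q_n|\,|1+q_{n-1}/(c_{n+1}q_n)|$ from below via $|q_{n-1}/(c_{n+1}q_n)|<1/\sqrt 2$ (again only using $|q_{n-1}/q_n|<1$ and $|c_{n+1}|\ge\sqrt2$). Your direct route---factor out $z_{n+1}$ and use $|1/z_{n+1}|\le\sqrt2/2$, $|q_{n-1}/q_n|<1$, and $|z_{n+1}|\ge|c_{n+1}|-\sqrt2/2\ge|c_{n+1}|/2$---also works and yields the same constant $\tfrac{4}{2-\sqrt2}$, so on that half you are fine once you drop the unnecessary $\sqrt2/2$ assumption on $|q_{n-1}/q_n|$.
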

\begin{proof}
Take any $n\in\N$. Direct computations give $|1 + q_{n-1}/(c_{n+1}q_n)|\geq 2 - \sqrt{2}/2$ (see \cite[Proposition 7.2]{BugeaudGeroHussain2023}). Then, by part \ref{Propo:HCFApproximation:03} of Proposition \ref{Propo:HCFApproximation},
\begin{align*}
\left| \zeta - \frac{p_n}{q_n}\right|
&\leq 
\left| \zeta - \frac{p_{n+1}}{q_{n+1}}\right| + \left| \frac{p_{n+1}}{q_{n+1}} - \frac{p_n}{q_n}\right|  \\
&<
\frac{2}{|q_nq_{n+1}|^2} 
=
\frac{2}{|c_{n+1} q_n^2| \left|1 + \frac{q_{n-1} }{c_{n+1}q_n } \right|}
<
\left(\frac{4}{2-\sqrt{2}}\right)\, \frac{1}{|c_{n+1}||q_n|^2}.
\end{align*}
For the lower bound, we use part \ref{Propo:HCFApproximation:02} of Proposition \ref{Propo:HCFApproximation} to get
\[
\left| \zeta - \frac{p_n}{q_n}\right|
=
\left| \frac{[c_{n+1},c_{n+2},\ldots]_{\C}}{q_n^2\left( 1 + \frac{q_{n-1}}{q_n}[c_{n+1},c_{n+2},\ldots]_{\C}\right)} \right|.
\]
Hence, the strict growth $(|q_j|)_{j\geq 1}$ and $[c_{n+1}, c_{n+2},\ldots]_{\C}\in \mfF$ gives us
\[
\left| 1 + \frac{q_{n-1}}{q_n}[c_{n+1}, c_{n+2},\ldots]_{\C}\right| \leq 1 + \frac{\sqrt{2}}{2}
\]
and, because of $[c_{n+2}, c_{n+3},\ldots]_{\C} \in\mfF$,
\[
\left| [c_{n+1}, c_{n+2},\ldots]_{\C}\right|
=
\frac{1}{\left| c_{n+1} + [c_{n+2}, c_{n+3},\ldots]_{\C} \right| } \geq \frac{1}{\left| c_{n+1}\right| + \frac{\sqrt{2}}{2}  }.
\]
\end{proof}
\begin{remark}
    For RCFs, the estimates corresponding to Lemma \ref{Le:Proof:Gero:01} are an immediate consequence of $|\alpha - p_n/q_n|< (q_nq_{n+1})^{-1}$ for all $n\in\N$. This estimate, however, may fail for HCFs. For instance, if $q_1, q_2$ are the first two denominators of the HCF of
    \[
    \zeta 
    = 
    \left[-2 + 2i, 2+2i, \frac{-1-i}{4\sqrt{2}}\right],
    \]
    then $|\zeta - p_1/q_1|\geq 1/|q_1q_2|$.
\end{remark}

Consider $b=-A+i\in \Z[i]$ with $A\in\N$, a natural number $v\in \N$, and $r\in \Z[i]$ such that 
\[
\frac{r}{b^v} 
=
[a_1,a_2,\ldots, a_h]_{\C}.
\]
Assume that both $\bfa=(a_1,\ldots, a_h)$ and $\overleftarrow{\bfa}$ belong to $\sfF(h)$. In this framework, to each sequence of natural numbers $\bfu=(u_n)_{n\geq 1}$ we will associate a complex number $\xi_{\bfu}$ by the following procedure.

First, define the sequence $(v_n)_{n\geq 0}$ of natural numbers by
\[
v_0:= v
\qquad\text{ and }\qquad
\forall n\in  \N \quad v_{n}=u_{n} + 2v_{n-1},
\]
so $v_{n}\geq 2^n$ for $n\in \N$. Consider the sequence of complex numbers $(\xi^{j}_{\bfu})_{j\geq 1}$ obtained by a repeated 
 application of Folding Lemma:
\[
\xi^1_{\bfu} := \frac{r}{b^v} + \frac{(-1)^h}{b^{v_1}}
\qquad\text{ and }\qquad
\forall n\in \N \quad \xi^{n+1}_{\bfu} :=\xi^{n}_{\bfu} + \frac{-1}{b^{v_{n+1}}}.
\]
Since $|b|>1$, the following limit exists:
\[
\xi_{\bfu}
:=
\lim_{n\to\infty} \xi^{n}_{\bfu},
\]
For each $\bfb:=(b_1,\ldots,b_{n})\in \DD^n$, $n\in\N$, write $-\bfb:=(-b_1,\ldots,-b_{n})$. Folding Lemma gives us the HCF of each $\xi_{\bfu}^j$ and of $\xi_{\bfu}$ straightaway. Indeed, given $d\in \DD$, let $f_d:\DD^{+}\to\DD^{+}$ be the function mapping $\bfb=(b_1,\ldots,b_n)\in \DD^{+}$ to $f_d(\bfb)= \bfb\,d\,\overleftarrow{-\bfb}$. Then, writing $f_{d_n}\cdots f_{d_1}:=f_{d_n}\circ\cdots\circ f_{d_1}$ for any $n\in\N$ and $d_1,\ldots, d_n\in\DD$, we have
\[
\xi^1_{\bfu} = [f_{b^{u_1}}(\bfa)]_{\C}, \quad
\xi^2_{\bfu} = [f_{b^{u_2}}f_{b^{u_1}}(\bfa)]_{\C}, \quad
\xi^3_{\bfu} = [f_{b^{u_3}}f_{b^{u_2}}f_{b^{u_1}}(\bfa)]_{\C},
\]
and so on. Note that $|f_{d_n}\cdots f_{d_1}(\bfa)|= 2^n(h+1) -1$ for all $n\in\N$ and $(\xi_{\bfu}^j)_{j\geq 1}$ is indeed constructed via repeated applications of Folding Lemma. Finally, if we write
\[
\bfc
:=
(c_n)_{n\geq 1}
=
\lim_{n\to\infty}
f_{b^{u_n}} \cdots f_{b^{u_1}} (\bfa ),
\]
the number $\xi_{\bfu}$ admits the continued fraction expansion $\xi_{\bfu}=[c_1,c_2,c_3,\ldots]$. 
\begin{remark}
In general, $[c_1,c_2,c_3,\ldots]$ might not be the HCF of $\xi_{\bfu}$. Nevertheless, in this section we will have $|c_j|\geq \sqrt{8}$ for every $j\in \N$ which, by virtue of \ref{Prop:FullCyls:02} in Proposition \ref{Prop:FullCyls}, implies $\xi_\bfu=[c_1,c_2,c_3,\ldots]_\C$.
\end{remark}
By construction, the number $\xi_{\bfu}=[c_1,c_2,c_3,\ldots]$ has the following useful properties.  \begin{proposition}\label{Prop:Gero:Proof}
Let $m\in\N$ be arbitrary. Denote by $d_m$ the nearest Gaussian integer to $b^{v_k}\xi_{\bfu}$. The following assertions hold: 
\begin{enumerate}[label=\normalfont(\arabic*)]
\item\label{Prop:Gero:Proof:01} We have 
    \[
    c_{2^m(h+1)} = c^{u_{m-1}} -1 
    \quad\text{ and }\quad
    \frac{p_{2^m(h+1)-1}}{q_{2^m(h+1)-1}}=\frac{d_m}{b^{v_m}}.
    \]
    \item\label{Prop:Gero:Proof:02}  If $j\in \{2^{m-1}(h+1), \ldots, 2^{m}(h+1)-1\}$, then
    \[
    \left| \xi_{\bfu} - \frac{p_{j-1}}{q_{j-1}}\right|
    >
    \left(\frac{2}{2 + \sqrt{2}}\right)\,\frac{1}{ |q_j|^2 \left( |b|^{u_{ m - 1 }} + \frac{\sqrt{2}}{2}\right)}.
    \]
    \item\label{Prop:Gero:Proof:03} We have
    \[
    \frac{1}{2}\,
    \frac{1}{|b|^{v_{m+1}}}
    \leq 
    \left| \xi_{\bfu} - \frac{d_m}{b^{v_m}}\right|
    \leq 
    \frac{3}{2}\,
    \frac{1}{|b|^{v_{m+1}}}.
    \]
\end{enumerate}
\end{proposition}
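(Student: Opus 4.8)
The plan rests on the observation that, under the running hypotheses, $|c_j|\geq\sqrt 8$ for every $j$, so part \ref{Prop:FullCyls:02} of Proposition \ref{Prop:FullCyls} makes $[c_1,c_2,\ldots]$ the genuine Hurwitz continued fraction of $\xi_{\bfu}$; hence $p_n/q_n=[c_1,\ldots,c_n]_{\C}$ is the $n$-th HCF convergent and each $c_{n+1}$ an honest partial quotient, which is what makes the three assertions meaningful. Writing $W_m:=f_{b^{u_m}}\cdots f_{b^{u_1}}(\bfa)$ with $W_0:=\bfa$, the construction already records $\xi^m_{\bfu}=[W_m]_{\C}$, $W_m=W_{m-1}\,b^{u_m}\,\overleftarrow{-W_{m-1}}$, and $|W_m|=2^m(h+1)-1$. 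Thus $(c_1,\ldots,c_{2^m(h+1)-1})=W_m$, and the digit inserted at the next folding step occupies position $|W_m|+1=2^m(h+1)$, which identifies $c_{2^m(h+1)}$ --- the first identity of \ref{Prop:Gero:Proof:01}. Also, iterating $\xi^{n}_{\bfu}=\xi^{n-1}_{\bfu}-b^{-v_n}$ and clearing denominators gives $\xi^m_{\bfu}=r_m/b^{v_m}$ with $r_m\in\Z[i]$.

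The remaining content is quantitative. To complete \ref{Prop:Gero:Proof:01} I would bound $\xi_{\bfu}-\xi^m_{\bfu}=-\sum_{n>m}b^{-v_n}$. Here one must use the full recursion $v_n=u_n+2v_{n-1}$, not merely $v_n\geq 2^n$: it gives $v_{m+1}\geq 7$ and $v_{m+2}-v_{m+1}=u_{m+2}+v_{m+1}\geq 8$ (with faster growth afterwards), so $|\xi_{\bfu}-\xi^m_{\bfu}|\leq(1+\varepsilon)|b|^{-v_{m+1}}$ with $\varepsilon:=|b|^{-8}/(1-|b|^{-1})<\tfrac14$. Since $v_{m+1}-v_m=u_{m+1}+v_m\geq 4$, this yields $|b|^{v_m}|\xi_{\bfu}-\xi^m_{\bfu}|\leq(1+\varepsilon)|b|^{-(u_{m+1}+v_m)}<\tfrac12$, so $r_m$ is the nearest Gaussian integer to $b^{v_m}\xi_{\bfu}$, i.e.\ $d_m=r_m$, and $p_{2^m(h+1)-1}/q_{2^m(h+1)-1}=[W_m]_{\C}=r_m/b^{v_m}=d_m/b^{v_m}$. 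Assertion \ref{Prop:Gero:Proof:03} then follows immediately: $\xi_{\bfu}-d_m/b^{v_m}=-b^{-v_{m+1}}\bigl(1+\sum_{n>m+1}b^{-(v_n-v_{m+1})}\bigr)$, whose bracketed tail has modulus $\leq\varepsilon<\tfrac12$, so the bracketed factor has modulus in $[\tfrac12,\tfrac32]$.

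For \ref{Prop:Gero:Proof:02} I would apply Lemma \ref{Le:Proof:Gero:01} at index $n=j-1$: $|\xi_{\bfu}-p_{j-1}/q_{j-1}|\geq\frac{2}{2+\sqrt 2}\cdot\frac{1}{|q_{j-1}|^2(|c_j|+\sqrt 2/2)}$. Then $|q_{j-1}|<|q_j|$ by part \ref{Propo:HCFApproximation:01} of Proposition \ref{Propo:HCFApproximation}, and for $j$ in the indicated block $c_j$ is either one of the inserted digits $b^{u_k}$ or $\pm$ an entry of some $W_{k-1}$, so $|c_j|$ is bounded by the largest modulus among the relevant inserted digits and among $a_1,\ldots,a_h$; under the running hypotheses this is the quantity displayed in the statement, and substituting gives the claimed lower bound. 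The genuinely delicate point is keeping the constants in the decay estimate small enough to reach $<\tfrac12$ and $[\tfrac12,\tfrac32]$ on the nose --- which, as noted, forces use of the doubling recursion for $v_n$ rather than its mere exponential lower bound --- together with the (elementary but attention-requiring) reading of which moduli can occur as $|c_j|$ inside a prescribed block of the folded word.
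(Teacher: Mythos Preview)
Your proposal is correct and follows essentially the same route as the paper. The paper is terser: for \ref{Prop:Gero:Proof:01} it simply writes ``this follows from the definition of $\bfc$''; for \ref{Prop:Gero:Proof:02} it records $|c_j|\le |b|^{u_{m-1}}$ on the indicated block and invokes Lemma \ref{Le:Proof:Gero:01}; and for \ref{Prop:Gero:Proof:03} it bounds the tail via $\sum_{j\ge 2}|b|^{-v_{m+j}}<(|b|^{v_{m+2}}-1)^{-1}<2|b|^{-v_{m+2}}<\tfrac12|b|^{-v_{m+1}}$, using $4<|b|^{u_{m+2}+v_{m+1}}$. Your explicit verification that $r_m$ is the nearest Gaussian integer to $b^{v_m}\xi_{\bfu}$, and your explicit appeal to $|q_{j-1}|<|q_j|$ to pass from the $|q_{j-1}|^2$ in Lemma \ref{Le:Proof:Gero:01} to the $|q_j|^2$ in the stated bound, make visible two steps the paper leaves implicit.
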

\begin{proof}
    \begin{enumerate}
        \item This follows from the definition of $\bfc$.
        \item  By construction of $\bfc$, for each $j\in\{2^{m-1}(h+1),\ldots, 2^m(h+1)-1\}$ we have $|c_j| \leq |b|^{u_{m - 1}}$ and, by Lemma \ref{Le:Proof:Gero:01},
\[
\left| \xi_{\bfu} - \frac{p_{j-1}}{q_{j-1}}\right| 
>
\left(\frac{2}{2 + \sqrt{2}}\right)\,
\frac{1}{ |q_j|^2 \left( |b|^{u_{ m - 1 }} + \frac{\sqrt{2}}{2}\right)}.
\]
        \item From $4< |b|^{u_{m+2} + v_{m+1}}$ we obtain
\[
\left| \sum_{j=2}^{\infty} \frac{1}{b^{v_{m+j}}}\right|
\leq 
\sum_{j=2}^{\infty} \frac{1}{|b|^{v_{m+j}}}
<
\frac{1}{|b|^{v_{m+2}} -1 }
<
\frac{2}{|b|^{v_{m+2}}}
< \frac{1}{2|b|^{v_{m+1}} }.
\]
As a consequence, we have
\[
\frac{1}{2|b|^{v_{m+1}}}
< 
\left| \sum_{j=1}^{\infty} \frac{1}{b^{v_{m+j}}}\right|
<
\frac{3}{2|b|^{v_{m+1}}},
\quad
\text{ so }
\quad
\frac{|b|^{v_m}}{2|b|^{v_{m+1}}}
< 
\left| b^{v_m}\xi_{\bfu}  - d_m\right|
<
\frac{3|b|^{v_m}}{2|b|^{v_{m+1}}}.
\]
    \end{enumerate}
\end{proof} 
The proofs of Theorem \ref{Te:Gero:01} and \ref{Te:Gero:02} follow the lines of Bugeaud's argument for RCFs \cite{BugeaudDACS2008}. We first observe that for any $\xi\in \C\setminus \Q(i)$ and all $c\in \Z[i]\setminus\{0\}$, $r\in \Q(i)$ we have  $\mu\left( c\xi + r\right) = \mu(\xi)$.

\begin{proof}[Proof of Theorem \ref{Te:Gero:01}]
Let $v\in \N$ such that $|b|^v\geq \sqrt{8}$, then for every $z\in \Z[i]$ with $|z|\geq \sqrt{8}$ we have $(b^v,z,-b^v), (-b^v,z,b^v) \in \sfF(3)$ and, by Folding Lemma,
\[
\frac{1}{b^v} - \frac{1}{zb^{2v}}
=
[b^v,z,-b^v]_{\C}.
\]
Define $u_1:=1$ and for $n\in \N$ let $u_{n+1}$ be the minimum integer $u$ satisfying $2< |b|^{u_{n+1}} |b|^{2v_n}\Psi(|b|^{v_n})$, and hence
\begin{equation}\label{Eq:Proof:Gero:02}
\forall n\in \N
\quad
2
<
|b|^{v_{n+1}} \Psi\left( |b|^{v_n}\right)
\leq
2|b|
\;\text{ and }\;
\sqrt{8}\leq b^{v_n}.
\end{equation}
From statement \ref{Prop:Gero:Proof:03} 
of Proposition \ref{Prop:Gero:Proof}, we conclude  that
\[
\frac{\Psi\left( |b|^{v_k}\right) }{4|b|}
\leq 
\left| \xi_{\bfu}  - \frac{d_k}{b^{v_k}} \right|
<
\Psi\left( |b|^{v_k}\right).
\]
The sequence $\bfu$ is strictly increasing, because $(v_n)_{n\geq 1}$ is strictly increasing and $x\mapsto x^2\Psi(x)$ is strictly decreasing. If $j\in\N$ and $m\in\N$ is such that $2^{m-1}(h+1)\leq j$, then $|b|^{v_{m-1}}\leq |q_j|$. As a consequence, in the view of Proposition \ref{Prop:Gero:Proof}, for any $\delta>0$ and any large $j\in\N$ we have
\begin{align*}
\left( |b|^{u_{ m - 1 }} + \frac{\sqrt{2}}{2}\right)|q_j|^2\Psi(|q_j|)
&\leq 
(|b|^{u_{m-1}} + 1)|q_j|^2\Psi(|q_j|) \\
&\leq
(|b|^{u_{m-1}} + 1) |b|^{2v_{m-1}}\Psi\left(|b|^{v_{m-1}}\right) \\
&\leq 
2(|b|^{u_{m-1}} + 1) |b|^{1-u_m} \\
&\leq
\frac{2}{\sqrt{2}} (|b|+\delta).
\end{align*}
In the last step we have used $|b|> \sqrt{2}$ and $|b|^{-u_m}\to 0$ when $m\to\infty$. In particular, when $\delta>0$ satisfies $(|b| + 2\delta)^{-1}> c$, the third part of Proposition \ref{Prop:Gero:Proof} tells us that
\[
\left| \xi_{\bfu} - \frac{d_m}{b^{u_m}}\right|
> 
\frac{\Psi(|q_j|)}{|b| + 2\delta}
>
c \Psi(|q_j|).
\]
We conclude that $\xi_{\bfu}\in \KK(\Psi)\setminus\KK(c\Psi)$. In order to get uncountably many numbers when $A\geq 2$, consider sequences $\bfw=(w_n)_{n\geq 1}$ determined satisfying $w_1=1$, $w_{2n+1}=u_n$, and $w_{2n}\in \{1,2\}$ for $n\in\N$ and apply the above construction.
\end{proof}
\begin{proof}[Proof of Theorem \ref{Te:Gero:02}]
Take $\lambda>0$ and $\tau\geq 2$. The estimate $\mu( \xi_{\lambda,\tau})\geq \tau$ can be obtained approximating $\xi_{\lambda,\tau}$ by the partial sums of the series defining it. In order to show $\mu( \xi_{\lambda,\tau})\leq \tau$, define $v_0:=\lfloor \lambda\tau^{3 + n_0(\tau,\lambda)}\rfloor$  and let $\bfu=(u_n)_{n\geq 1}$ be given by
\[
\forall n\in \N
\quad
u_n
:=
\left\lfloor \lambda\tau^{n + 3 + n_0(\tau,\lambda)} \right\rfloor 
-
2\left\lfloor \lambda\tau^{n + 2 + n_0(\tau,\lambda)} \right\rfloor.
\]
It can be readily checked that
\[
\forall n\in\N
\quad
v_n = \left\lfloor \lambda\tau^{n + 3 + n_0(\tau,\lambda)} \right\rfloor.
\]
Take any $j\in\N$ and pick $m\in\N$ such that $2^m(h+1)-1 \leq j < 2^{m+1}(h+1)-1$, then
\[
|q_{j-1}|
\leq
|b|^{v_{m+1}}
= 
|b|^{\lfloor \lambda\tau^{4+m+n_0(\lambda,\tau)} \rfloor }
\leq
|b|^{\lambda\tau^{4+m+n_0(\lambda,\tau)}},
\]
so
\[
|q_{j-1}|^{\tau - 2}
\leq 
b^{(\tau -2)(\lambda\tau^{4+m+n_0(\lambda,\tau)})}.
\]
By definition of $u_{m-1}$, we have
\[
|b|^{u_{m-1}} 
\leq 
|b|\frac{|b|^{\lambda\tau^{2+m+n_0(\tau,\lambda)} }}{ |b|^{2\lambda \tau^{m+1+n_0(\tau,\lambda))}}} 
=
|b| \, |b|^{(\tau-2)\lambda^{2+m+n_0(\tau,\lambda)} }
\leq
|b| \, |b|^{(\tau-2)\lambda^{4+m+n_0(\tau,\lambda)} }.
\]
in the view of Lemma \ref{Le:Proof:Gero:01}, for some constants depending on $b$ (implied by $\gg_b$),
\[
\left| \xi_{\lambda,\tau} - \frac{p_{j-1}}{q_{j-1}}\right| 
\gg_{b}
\frac{1}{|q_{j-1}|^2|b|^{u_{m-1}}}
\gg_{b}
\frac{1}{|q_{j-1}|^{\tau}},
\]
therefore $\mu(\xi_{\lambda,\tau})\leq \tau$. 

A similar argument holds when $\tau=2$, but we need to be careful. In this case, we have 
\[
\forall k\in\N 
\quad
\varepsilon_k
:=
\lfloor \lambda 2^{k+1}\rfloor - 2\lfloor \lambda 2^k \rfloor \in \{0,1\}.
\]
Folding Lemma gives a continued fraction of the form $[a_1,\ldots, a_n, 1, -a_n, -a_{n-1}, \ldots, a_1]$, which is not a HCF. However, using the well-known identity
\begin{equation}\label{foldingforx=1}
x_1 + \cfrac{1}{1 + \cfrac{1}{x_2 + x_3}}
=
x_1 + 1 + \cfrac{1}{-(x_2+1)-x_3}
\end{equation}
(see, for example, \cite[p. 258]{IosifescuKraaikamp2002}), we obtain the HCF
\begin{equation}\label{pm1}
[a_1,\ldots, a_{n-1}, a_n + 1, a_{n} -1 , a_{n-1}, \ldots, a_{1}]_{\C}
\end{equation}
(see Proposition \ref{Prop:FullCyls}). Moreover, $\mu(\xi_{2,\tau})=2$ because the partial quotients are bounded. 
\end{proof}
\section{Zaremba conjecture for powers of small numbers}\label{SEC:Zaremba}
First, let us introduce some useful notation. For $z\in\Q(i)\cap\mfF$ with a HCF $z=[a_1(z),\ldots,a_n(z)]_{\C}$, 
we define
$$K_\C(z) = \max (|a_1(z)|,\ldots,|a_n(z)|)$$ and
$a_{last}=a_{last}(z)$ to be the last partial quotient in HCF of $z$ (in this case $a_{last}(z)=a_n(z)$). 


In the previous section in proofs of Theorem \ref{Te:Gero:01} and Theorem \ref{Te:Gero:02} we have considered denominators of the form $b^k$, where $b=-A\pm i$ for some $A,k\in\N$. Using Folding Lemma we can show that if we fix $A$ to be a small integer, we can always find HCF with bounded partial quotients. For $A=3$ we have the following theorem.
\begin{theorem}\label{zarembafor-3+i}
    If $m=(-3\pm i)^k,k\geq1$, then there exists a Gaussian integer $a$ with $\gcd(a,m)=1$, $a/m\in\mfF$, such that $K_\C(a/m) \leq 3\sqrt2$.
\end{theorem}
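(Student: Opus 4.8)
The plan is to use the Folding Lemma (Lemma \ref{folding}) in exactly the way it was used in Section \ref{SEC:IrrationalityExp}, but now in reverse: instead of building a complex number with prescribed approximation properties out of a starting rational $r/b^v$, I want to produce, for each $k$, a Gaussian rational with denominator (up to associates and coprimality) equal to $(-3\pm i)^k$ whose Hurwitz partial quotients are all bounded by $3\sqrt2$. Write $b=-3\pm i$, so $|b|^2=10$ and $|b|=\sqrt{10}>\sqrt8$. The key observation is that $b$ itself is a single partial quotient: $|b|=\sqrt{10}\geq\sqrt8$, so by Proposition \ref{Prop:FullCyls}, any finite word all of whose entries have absolute value $\geq\sqrt8$ is full, hence valid, and the corresponding HCF is genuine. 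In particular $1/b=[b]_\C$ is the HCF of a point of $\mfF$ (note $|1/b|=1/\sqrt{10}<1/2$, so $1/b\in\mfF$), and it has a single partial quotient $b$ with $|b|=\sqrt{10}\leq 3\sqrt2=\sqrt{18}$.

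Now I iterate the Folding Lemma. Starting from $p_0/q_0 = 1/b = [b]_\C$ (with $q_0=b$, so $q_0^2=b^2$), applying Lemma \ref{folding} with the choice $x=b^{t}$ for a suitable $t\geq 1$ produces
\[
\frac{1}{b}+\frac{(-1)^{1}}{b^{t}\,b^{2}}=[\,b,\ b^{t},\ -b\,]_\C,
\]
a Gaussian rational whose denominator is $b^{t+2}$ and whose partial quotients are $b, b^t, -b$, all of absolute value $\geq\sqrt{10}$. Crucially the exponent jumps by $t+2$, where $t$ is at my disposal; choosing $t=1$ makes the exponent jump by exactly $3$ each step. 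Iterating $f_{b}$ (in the notation $f_d(\mathbf b)=\mathbf b\,d\,\overleftarrow{-\mathbf b}$ from Section \ref{SEC:IrrationalityExp}), after $j$ steps I obtain a word $\mathbf c^{(j)}$ of length $2^{j+1}-1$, all entries being $\pm b$ (hence all of absolute value exactly $\sqrt{10}\leq 3\sqrt2$), which is the HCF of a Gaussian rational $r_j/b^{e_j}$ with $e_j = 1+3+3\cdot 2+\cdots$; more precisely, tracking the denominator growth under folding (each fold squares the denominator and multiplies by $b^{t}$) gives $e_j$ growing like $2^j$ rather than $3^j$. This overshoots: I need exponent exactly $k$, not $2^j$. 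The fix is to vary the exponents $t_i\in\{1,2,\dots\}$ used at each fold and, at the last step, to do a partial adjustment — or, more cleanly, to not insist on pure powers of $b$ as the inserted partial quotients but to allow one arbitrary "correction" partial quotient $z$ with $\sqrt8\leq |z|\leq 3\sqrt2$ at the end, since the set $\{z\in\Z[i]:\sqrt8\leq|z|\leq\sqrt{18}\}$ is nonempty and, by Proposition \ref{Prop:FullCyls}, any such insertion keeps the word full. Concretely: write $b^k$ and realise $1/b^{k'}$ for the right $k'$ by folding with mixed exponents $t_i$, then append $f_z$ once to reach exponent $k$ exactly while keeping coprimality with $b^k$ (the numerator is automatically coprime to $b^k$ since the continued fraction is in lowest terms, $\gcd(p_n,q_n)=1$ by the identity $q_np_{n-1}-q_{n-1}p_n=(-1)^n$).

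The one genuine subtlety — and what I expect to be the main obstacle — is matching the exponent $k$ \emph{exactly} for every $k\geq 1$ while never using a partial quotient of absolute value exceeding $3\sqrt2$. Folding with insertion $b^{t}$ moves the denominator exponent from $e$ to $2e+t$; starting from $e=1$ and choosing $t_1,\dots,t_j\geq 1$ freely, the reachable exponents after $j$ folds are exactly $\{2^{j}\cdot 1 + \sum_{i} 2^{j-i}t_i\}$, which for $j$ large enough covers a full interval of integers once we also allow $t_i$ up to, say, $3$ (so that $|b^{t_i}|=10^{t_i/2}$ might exceed $\sqrt{18}$ — here is the catch, $|b^2|=10>\sqrt{18}$ already). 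So in fact I cannot insert $b^2$ or higher; I am restricted to inserting $b$ or $-b$ (absolute value $\sqrt{10}$) or a single adjusting Gaussian integer $z$ with $|z|\in[\sqrt8,\sqrt{18}]$. With only $t_i=1$ available the reachable exponents after $j$ folds are $2^{j+1}-1$, a sparse set; the adjusting insertion $f_z$ at the very end changes the exponent from $e$ to $2e+1$ as well (since $|z|^2\in\{8,\dots,17\}$, the denominator gets multiplied by $z$, contributing only $1$ to the exponent count \emph{if} $z$ is a prime not dividing $b$ — but then $\gcd(a,m)$ fails!). The resolution the authors surely intend: allow $z$ to be a Gaussian integer whose norm is a power of $|b|^2=10$, i.e. $z=b^s\cdot(\text{unit})$ — but units are forbidden as partial quotients and $|b^2|$ is too big. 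Hence the real mechanism must be to fold with $x=b$ most of the time and handle the residue $k \bmod (\text{something})$ by choosing the \emph{initial} rational more cleverly: instead of $1/b$, start from $r/b^{r_0}=[a_1,\dots,a_h]_\C$ with $h$ and $r_0$ chosen so that the subsequent doublings $2^j(r_0)+\ldots$ hit $k$, using the freedom in $r_0\in\{1,2,3\}$ and a short tail word of bounded partial quotients. Verifying that such a starting block exists for each residue class — i.e. that short full words with partial quotients bounded by $3\sqrt2$ and denominator exponent in each needed residue class can be found, and that $3\sqrt2$ (not something smaller) is exactly the bound that makes the finitely many "small" cases work — is the crux, and is presumably done by an explicit finite check; the asymptotic tail is then pure Folding Lemma as sketched above, with validity guaranteed throughout by Proposition \ref{Prop:FullCyls} since every partial quotient has absolute value $\geq\sqrt8$.
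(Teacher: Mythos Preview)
Your proposal has a genuine gap: you never consider folding with $x=1$. The paper's key move is exactly the one you dismissed as unavailable. Folding with $x=1$ sends the denominator exponent from $e$ to $2e$ (not $2e+1$), and although the raw output $[a_1,\ldots,a_n,1,-a_n,\ldots,-a_1]$ is not a valid HCF, identity \eqref{foldingforx=1} rewrites it as $[a_1,\ldots,a_{n-1},a_n+1,a_n-1,a_{n-1},\ldots,a_1]_\C$, which \emph{is} valid provided $|a_n\pm 1|\geq\sqrt8$. With the two moves $e\mapsto 2e$ (fold with $x=1$) and $e\mapsto 2e+1$ (fold with $x=b$), every integer $k\geq 1$ is reached from the base cases $k=1,2$ by reading the binary expansion of $k$; the paper simply says ``for even powers apply Lemma \ref{folding} with $x=1$, for odd powers with $x=-3\pm i$''. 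The explicit seed for $k=2$, namely $(2\pm 3i)/(-3\pm i)^2=[\mp 3i,-2\mp 3i]_\C$, is chosen so that the $\pm 1$ perturbations of its last partial quotient stay in the window $[\sqrt8,3\sqrt2]$ (indeed $-2\mp 3i-1=-3\mp 3i$ has absolute value exactly $3\sqrt2$, which is where the constant in the theorem comes from).

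Your alternative route cannot be completed. With only the $x=b$ fold, the exponent map is $e\mapsto 2e+1$, so from any finite set $S$ of starting exponents the reachable exponents are exactly $\{2^j(e_0+1)-1:e_0\in S,\ j\geq 0\}$. In particular $k$ is reachable only when the odd part of $k+1$ equals the odd part of some $e_0+1$ with $e_0\in S$; since $S$ is finite this misses all $k$ with $k+1$ having a large odd prime factor. No ``explicit finite check'' of base cases can fix this, and your proposed adjusting partial quotient $z$ with $\sqrt8\leq|z|\leq 3\sqrt2$ either fails to keep the denominator a pure power of $b$ or is a unit times $b$, which gains nothing. The missing idea is precisely the $x=1$ fold combined with \eqref{foldingforx=1}.
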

\begin{proof}
For the base of inductive procedure consider the following HCFs.
\begin{align*}
\frac{1}{-3\pm i}=[-3\pm i]_\C, \quad \frac{2\pm3i}{(-3\pm i)^2}=[\mp 3i,-2\mp3i]_\C.
\end{align*}
Then for even powers of $-3\pm i$ we apply Lemma \ref{folding} with $x=1$, so the resulting fraction will be of the form 
$$
[a_1,\ldots,a_{last-1},a_{last}+1,a_{last}-1,a_{last-1},\ldots,a_1]_\C.
$$
Note that we will only change the real part of the last partial quotient by $\pm1$ on each step of the iteration and as one can see, it will not impose any non-trivial restrictions on the partial quotients as for every $i$ we have $|a_i| \geq \sqrt8.$

For odd powers of $-3\pm i$, we apply Lemma \ref{folding} with $x=(-3\pm i)$, so the resulting fraction will be of the form 
$$
[a_1,\ldots,a_{last},-3\pm i,-a_{last},\ldots,-a_1]_\C
$$
and once again for every partial quotient we have $|a_i|\geq \sqrt8$, so there are no non-trivial restrictions on the partial quotients.

Note that for both odd and even powers, the maximum absolute value of partial quotients will be not greater than $3\sqrt2.$
    Continuing the procedure, we will generate HCFs for every power of $-3\pm i$.
\end{proof}
We also have a similar statement for $b=-2\pm i$.
\begin{theorem}\label{zarembafor-2+i}
    If $m=(-2\pm i)^k,k\geq1$, then there exists a Gaussian integer $a$ with $\gcd(a,m)=1$, $a/m\in\mfF$, such that $K_\C(a/m) \leq 3\sqrt2$.
\end{theorem}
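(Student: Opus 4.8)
The plan is to mimic the proof of Theorem \ref{zarembafor-3+i} essentially verbatim, with the base cases adjusted for $b=-2\pm i$. First I would record explicit HCFs of suitable Gaussian rationals with denominators $(-2\pm i)$ and $(-2\pm i)^2$; the point is to find starting words $\bfa$ all of whose partial quotients have absolute value at least $\sqrt{8}$, so that Proposition \ref{Prop:FullCyls}\ref{Prop:FullCyls:01} guarantees these words lie in $\sfF(h)$ (hence are valid and reversible), and so that no non-trivial restriction from Table \ref{Table:HensleyTable} can ever be triggered. For the first power, $1/(-2\pm i)=[-2\pm i]_\C$ has $|{-2\pm i}|=\sqrt 5<\sqrt 8$, so I expect the honest base case to require a two-term (or suitably chosen) HCF of some $a/(-2\pm i)$ with both partial quotients of modulus $\geq\sqrt8$, together with a similar two-term HCF for $(-2\pm i)^2$; these will be produced by a direct computation of the complex Gauss map and then verified to be valid by the $\min_j|a_j|\geq\sqrt8$ criterion.

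Next I would run the inductive step exactly as before. Given an HCF $a/m_k=[a_1,\dots,a_h]_\C$ for $m_k=(-2\pm i)^k$ with $K_\C\le 3\sqrt2$ and all $|a_i|\ge\sqrt8$, apply the Folding Lemma (Lemma \ref{folding}): with $x=1$ we pass from an even power to the next even power, obtaining (after the identity \eqref{foldingforx=1} to remove the forbidden partial quotient $1$) a palindrome-type word $[a_1,\dots,a_{h-1},a_h+1,a_h-1,a_{h-1},\dots,a_1]_\C$; with $x=-2\pm i$ we pass to the next power, obtaining $[a_1,\dots,a_h,-2\pm i,-a_h,\dots,-a_1]_\C$. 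In each case the new partial quotients are either old ones, their negatives, or $a_h\pm1$, or the fixed Gaussian integer $-2\pm i$. I would check that all of these still have modulus $\ge\sqrt8$ — this is where the choice of base case matters, since we need $|a_h\pm1|\ge\sqrt8$ and $|{-2\pm i}|=\sqrt5$ is the one genuinely small new digit, so one must confirm that inserting $-2\pm i$ between two digits of modulus $\ge\sqrt8$ is allowed (it is, by the first line of Table \ref{Table:HensleyTable}, since its neighbours are large, and Proposition \ref{Prop:FullCyls} is not directly applicable but the validity follows from the table analysis). Finally the coprimality $\gcd(a,m_k)=1$ and $a/m_k\in\mfF$ are automatic from the continued-fraction formalism (the numerator and denominator of a convergent are coprime, and the value of a finite HCF lies in $\mfF$, or can be arranged to).

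The main obstacle I expect is precisely the handling of the digit $-2\pm i$ of modulus $\sqrt5$: unlike the $A=3$ case, where $|{-3\pm i}|=\sqrt{10}>\sqrt8$ and hence every digit ever produced is automatically in the "no restriction" regime, here we must verify that the word remains \emph{valid} even though one of its letters has $|a_i|=\sqrt5$, which puts us in the last two rows of Table \ref{Table:HensleyTable} with their two-form subtlety about $\mfFc_j$. So the delicate part of the write-up is to argue that, because $-2\pm i$ is always flanked on both sides by digits of modulus $\ge\sqrt8$ (for which $\max\{|\real|,|\imag|\}\ge 3$, putting us in the first row where the next digit is unrestricted), the insertion never creates an empty cylinder, and the reversed word is valid by the same reasoning. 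Once that is settled, the bound $K_\C(a/m)\le 3\sqrt2$ follows since the only digits are the base-case digits, their negatives, $a_h\pm1$ for the largest base digit $a_h$, and $-2\pm i$, all of modulus at most $3\sqrt2$; a short check of the explicit base cases closes the argument.
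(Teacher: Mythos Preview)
Your plan has a genuine gap in the validity argument, and the approach diverges from the paper's in a way that matters.

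First, the invariant you hope to maintain---all partial quotients of modulus at least $\sqrt{8}$---cannot survive the induction. The moment you fold with $x=-2\pm i$ you insert a digit of modulus $\sqrt{5}$ into the word. Every subsequent fold then reflects and re-inserts copies of this small digit, so from the second step onward you are permanently in the regime where Proposition~\ref{Prop:FullCyls} no longer applies and the delicate rows of Table~\ref{Table:HensleyTable} are in force. Your sketch does not say how validity is maintained once this happens; it treats the single inserted $-2\pm i$ as a one-off nuisance rather than a feature that propagates.

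Second, the specific validity argument you give for that one insertion is incorrect on two counts. The claim that $|a|\ge\sqrt{8}$ forces $\max\{|\real(a)|,|\imag(a)|\}\ge 3$ is false: take $a=2+2i$. More importantly, Table~\ref{Table:HensleyTable} constrains $a_{n+1}$ in terms of $a_n$, not in terms of what flanks $a_n$. Knowing that the digit \emph{before} $-2\pm i$ is large only tells you that $-2\pm i$ itself is an admissible successor; it says nothing about whether $-a_h$ is an admissible successor of $-2\pm i$. For that you are in the $|a_n|=\sqrt{5}$ rows, where the restriction depends on which of the two possible prototype sets you have reached---exactly the ``two-form subtlety'' you flag but do not resolve.

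The paper handles this differently: it takes seven explicit seed fractions (for $k=1,\ldots,7$) whose partial quotients satisfy only $\sqrt{5}\le|a_i|\le 3\sqrt{2}$, together with the side condition $\sqrt{5}\le|a_i\pm 1|\le 3\sqrt{2}$ on the first and last digits, and then devotes all of Section~\ref{validitycheck} to tracking the open prototype sets $\mfFc_n$ through the iterated folds $f$ and $g$, verifying case by case that they return to $\mfFc$ after short blocks. That computation is the real content of the proof for $b=-2\pm i$; it cannot be replaced by an appeal to the first row of the table.
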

\begin{proof}
For the base of inductive procedure consider the following HCFs.
\begin{align*}
&\frac{1}{-2\pm i}=[-2\pm i]_\C, \quad \frac{\pm2i}{(-2\pm i)^2}=[-2\mp i,\pm 2i]_\C, \quad \frac{2\mp 4i}{(-2\pm i)^3}=[-2\pm i, -2\pm i, 2\mp i]_\C, \\ 
& \frac{5\mp6i}{(-2\pm i)^4} = [2\mp 3i, -1\mp 2i,-3\pm i]_\C, \quad \frac{13 \mp 11i}{(-2\pm i)^5} = [\pm 3i,1\mp 3i,2\mp i,-2\mp 2i]_\C, \\
& \frac{27 \mp 38 i}{(-2 \pm  i)^6} = [-1\mp 3i, 1\mp 2i,1\mp 2i, -3\mp 2i,2 \mp 3i]_\C,\\
& \frac{\mp 97i}{(-2\pm i)^7} = [\pm 3i,3 \pm 3i, -2 \mp 2i, \pm 3i, 1\mp 3i]_\C.
\end{align*}
Note that in all fractions $p_n/(-2\pm i)^n,\, n \geq4$, all partial quotients satisfy 
\begin{equation}\label{allpartial}
\sqrt5 \leq \left|a_i\left(\frac{p_n}{(-2\pm i)^n}\right) \right|\leq 3\sqrt2
\end{equation}
and that the first and the last partial quotients satisfy
\begin{equation}\label{firstlast}
\sqrt5 \leq \left|a_i\left(\frac{p_n}{(-2\pm i)^n}\right) \pm 1 \right|\leq 3\sqrt2 \quad \text{for } i\in\{1,\rm last\}.
\end{equation}
Note that if $a_i>\sqrt5$, then there are no non-trivial restrictions on $a_{i+1}$. When $a_i=\sqrt5$, as per comments after Table \ref{Table:HensleyTable} we need to be careful in regards to whether the resulting fraction is a valid HCF. We will check the validity in Section \ref{validitycheck}.


For even powers of $(-2\pm i)^{2k},\,k\geq4$ we apply Lemma \ref{folding} with $x=1$ and $q_n = (-2\pm i)^{k}$, where by induction we already have a suitable HCF for $q_n =(-2\pm i)^{k}$. The resulting fraction will be of the form 
$$
\frac{p'}{(-2\pm i)^{2k}}=[a_1,\ldots,a_{last-1},a_{last}+1,a_{last}-1,a_{last-1},\ldots,a_1]_\C.
$$
It is easy to see that properties \eqref{allpartial} and \eqref{firstlast} are preserved under the application of Lemma \ref{folding} for this fraction, hence we can continue the inductive procedure.

For odd powers of $(-2\pm i)^{2k+1},\,k\geq4$, we apply Lemma \ref{folding} with $x=(-2\pm i)$, so the resulting fraction will be of the form 
$$
\frac{p'}{(-2\pm i)^{2k+1}}=[a_1,\ldots,a_{last},-2\pm i,-a_{last},\ldots,-a_1]_\C
$$

Once again, properties \eqref{allpartial} and \eqref{firstlast} are preserved.
    Continuing the procedure, we will generate continued fractions for every power of $-2\pm i$ with partial quotients bounded by $3\sqrt2.$ The final step is to show that the resulting fractions provided by Folding Lemma are indeed valid HCFs. As the proof of this fact is very technical, we provide details in a separate Section \ref{validitycheck}.
\end{proof}

Due to the statements \eqref{remarkdiff3} and \eqref{remarkdiff4} of Remark \ref{remarkdiff}, we can deal with HCFs of real rational numbers, too.

\begin{theorem}\label{zarembafor2}
If $m=2^k,k\geq1$, then there exists an odd integer $1\leq a<m$ such that $K_\C(a/m) \leq~8$.
\end{theorem}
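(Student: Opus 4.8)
The plan is to run the same Folding Lemma induction used for the Gaussian-integer bases $-3\pm i$ and $-2\pm i$, but now tailored to the real denominator $2^k$, keeping track of the extra subtlety that real partial quotients of absolute value exactly $2$ carry the non-trivial restriction from Table~\ref{Table:HensleyTable}. Concretely, I would set up an induction on $k$ whose inductive hypothesis is: there is an odd integer $a$ with $1\le a<2^k$, $a/2^k\in\mfF$ (i.e.\ one may take a representative in $(-\tfrac12,\tfrac12)$ after the obvious normalization), such that the HCF $a/2^k=[a_1,\dots,a_n]_\C$ satisfies $K_\C(a/2^k)\le 8$, together with some control on the first and last partial quotients that guarantees the next folding step stays valid and does not blow the bound past $8$.

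For the base cases I would just exhibit explicit HCFs of $1/2$, $a/4$, $a/8$, etc.\ for a handful of small exponents — exactly as done in Theorem~\ref{zarembafor-2+i} — chosen so that the first and last partial quotients are large enough (say of absolute value $\ge 3$, hence reversible and restriction-free by Remark~\ref{remarkdiff}\eqref{remarkdiff4}) while all partial quotients have absolute value $\le 8$. For the inductive step I would distinguish the parity of the target exponent: to pass from $2^\ell$ to $2^{2\ell}$ I apply Lemma~\ref{folding} with $x=1$, which turns $[a_1,\dots,a_n]_\C$ into $[a_1,\dots,a_{n-1},a_n+1,a_n-1,a_{n-1},\dots,a_1]_\C$ (using the identity \eqref{foldingforx=1} to clear the forbidden ``$1$'' in the middle, and the $\pm1$-perturbation version \eqref{pm1} of Proposition~\ref{Prop:FullCyls}); to pass from $2^\ell$ to $2^{\ell+1}$ I apply Lemma~\ref{folding} with $x=2$ (since $q_n=2^\ell$ multiplies by $2$), producing $[a_1,\dots,a_n,2,-a_n,\dots,-a_1]_\C$ — but here $|x|=2$ is exactly the delicate value, so I must check the restriction in Table~\ref{Table:HensleyTable} is met, namely that the partial quotient following the inserted $2$, which is $-a_n$, lands in the allowed half-plane $\{\real(\cdot)\ge 0\}$; this is where the inductive bookkeeping on the sign/real-part of $a_n$ (and $-a_1$, and the neighbours of any internal $\pm2$ that arose earlier) has to be arranged carefully, and it is why the bound $8$ rather than something smaller is needed. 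One also notes $\gcd(a,m)=1$ is automatic since $a$ is odd and $m=2^k$.

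The main obstacle, as in Theorem~\ref{zarembafor-2+i}, is verifying that the continued fractions spat out by repeated folding are genuine HCFs: the trivial restriction $a_i\in\DD$ is easy, but the non-trivial restrictions triggered by partial quotients of absolute value $2$ or $\sqrt5$ require a real case-analysis on consecutive pairs. I expect to handle this by strengthening the inductive hypothesis so that \emph{every} partial quotient of absolute value $2$ is flanked on both sides by partial quotients with the correct sign pattern ($a_{k-1}a_k>0$ in the real reversibility criterion of Remark~\ref{remarkdiff}\eqref{remarkdiff3}), a property one checks is preserved by both the $x=1$ and $x=2$ folding operations, and by deferring the fully technical validity check to the dedicated Section~\ref{validitycheck} exactly as the authors do for the Gaussian bases. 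A short extra remark (cf.\ Remark~\ref{remark2n}) would explain why $\e=8$, rather than Niederreiter's $\e=3$ for RCFs, is forced here: folding with $x=2$ on the real line is exactly the borderline restricted case, so one cannot afford partial quotients as small as in the RCF setting.
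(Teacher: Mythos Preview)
Your plan has a genuine gap at the $x=2$ step, and it is exactly the obstruction the paper flags in Remark~\ref{remark2n}. Folding with $x=2$ produces the block $[\ldots,a_{last},2,-a_{last},\ldots]$. For this to be a valid HCF, Table~\ref{Table:HensleyTable} (or equivalently Remark~\ref{remarkdiff}\eqref{remarkdiff3}) forces the real partial quotient following the $2$ to be nonnegative, i.e.\ $-a_{last}\ge 0$. But the induction also needs the folded word to be \emph{reversible} so that the next fold is legitimate; in the reversed word the block becomes $[\ldots,-a_{last},2,a_{last},\ldots]$, and the same rule now forces $a_{last}\ge 0$. These two constraints are incompatible (since $a_{last}\neq 0$), so no strengthening of the inductive hypothesis on sign patterns can rescue folding with $x=2$: the inserted $2$ is always flanked by $a_{last}$ and $-a_{last}$, which have opposite signs. (There is also an arithmetic slip: $x=2$ takes $2^{\ell}$ to $2^{2\ell+1}$, not $2^{\ell+1}$.) Your $x=1$ step has a smaller but related issue: if $|a_{last}|=3$, then one of $a_{last}\pm 1$ equals $\pm 2$, and you are back in the restricted regime.

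The paper's fix is to avoid $|x|\le 2$ altogether: it folds with $x=4$ for even targets $N=2k+2$ and with $x=8$ for odd targets $N=2k+3$, so that the inserted partial quotient always has absolute value $\ge 3$ and Remark~\ref{remarkdiff}\eqref{remarkdiff4} applies directly, with no case analysis and no appeal to Section~\ref{validitycheck}. The inductive invariant is simply $3\le |a_1|,|a_{last}|\le 7$ and $K_\C\le 8$, which is visibly preserved by both folds. The price is a longer list of base cases (explicit HCFs for $2^1,\ldots,2^{13}$), since the recursion $N=2k+2$ or $N=2k+3$ needs $k\ge 6$ to feed into $N\ge 14$. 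This is also precisely why the bound is $8$ rather than Niederreiter's $3$.
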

\begin{proof}
First, note that 
\begin{align*}
&-\frac{1}{2^1}=[-2]_\C, \quad \frac{1}{2^2}=[4]_\C, \quad \frac{3}{2^3}=[3,-3]_\C, \quad \frac{5}{2^4}=[3,5]_\C, \quad \frac{9}{2^5} = [4,-2,-4]_\C, \quad \frac{17}{2^6}=[4,-4,-4]_\C,\\
&\frac{19}{2^7}=[7,-4,5]_\C, \quad \frac{79}{2^8}=[3,4,6,3]_\C, \quad \frac{71}{2^9}=[7,5,-4,-4]_\C, \quad \frac{165}{2^{10}}=[6,5,-7,5]_\C, \\
& \frac{423}{2^{11}}=[5,-6,-3,-5,-4]_\C, \quad \frac{557}{2^{12}}=[7,3,-6,5,-7]_\C, \quad \frac{1453}{2^{13}}=[6,-3,4,5,-5,-5].
\end{align*}
Note that all rational numbers $r/2^k,k\geq6$ listed above satisfy the conditions of the statement \eqref{remarkdiff4} of Remark \ref{remarkdiff}, and so these HCFs are reversible; we also have $3\leq|a_1(r/2^k)|,|a_{last}(r/2^k)|\leq 7$.

To prove the statement of this theorem, it is remaining to introduce an inductive procedure, which will generate valid HCF expansions for the fractions $g/2^N$ for some $g < 2^{N-1}$ and all $N\geq14.$

We are using the following inductive procedure: for every odd integer $N=2k+3\geq15$ we will apply Lemma \ref{folding} with $x=8$ and $q_n=2^k$. We get that $k\geq6$ and by the induction hypothesis there exists a reduced fraction $r/2^k$ with $K_\C(r/2^k) \leq 8$ and $3\leq|a_1(r/2^k)|,|a_{last}(r/2^k)|\leq 7$, so we also let $p_n=r$. Then the right-hand side in \eqref{folding:eq} is a fraction with denominator $xq_n^2 = 2^3\cdot 2^{2k} = 2^N,$ and its continued fraction expansion is
\begin{equation}\label{explicit3}
\frac{r'}{2^N}=[a_1,\ldots,a_{last},8,-a_{last},\ldots,-a_1]_\C.
\end{equation}
Note that this is indeed a valid HCF, because, as we said previously, $[a_1,\ldots,a_{last}]_\C$ is reversible, $3\leq|a_1(r/2^k)|,|a_{last}(r/2^k)|\leq 7$ and we added new partial quotients equal to $8$ in the middle, which does not impose any restrictions as per Remark \ref{remarkdiff}. Notice that the continued fraction \eqref{explicit3} preserves the property $3\leq|a_1(r'/2^N)|,|a_{last}(r'/2^N)|\leq 7$ and $K_\C(r'/2^N)\leq 8$, so we indeed can continue our inductive procedure.

For every even integer $N=2k+2\geq14$, we will apply Lemma \ref{folding} with $x=4$ and $q_n=2^k$. Note that in this case we also have $k\geq6$ and by induction, there exists a reduced fraction $r/2^k$ with $K_\C(r/2^k) \leq 8$ and $3\leq|a_1(r/2^k)|,|a_{last}(r/2^k)|\leq 7$, so we also let $p_n=r$. This means that the application of Folding Lemma will generate a valid HCF expansion. The right-hand side in \eqref{folding:eq} is a fraction with denominator $xq_n^2 = 2^2\cdot2^{2k}=2^N,$ and its continued fraction expansion is
$$
\frac{r'}{2^N}=[a_1,\ldots,a_{last},4,-a_{last},\ldots,-a_1]_\C,
$$
so this continued fraction preserves the property $3\leq|a_1(r'/2^N)|,|a_{last}(r'/2^N)|\leq 7$ and also $K_\C(r'/2^N)\leq 8$, so once again we can continue the procedure.
\end{proof}
\begin{remark}
This is similar to the idea of Niederreiter \cite{Niederreiter1986} for the case of RCFs of real numbers, however due to statement \eqref{remarkdiff1} of Remark \ref{remarkdiff}, in that case, one does not need to worry about reversibility as there are no non-trivial restrictions on partial quotients in RCFs. We also note that Zaremba's conjecture for powers of $2$ in the case of RCFs can be reformulated and proved in terms of Minkowski question mark function $?(x)$, see \cite{MR4121875}.
\end{remark}
\begin{remark}\label{remark2n}
    Note that we cannot let $x=2$ or $x=-2$ in the proof of Theorem \ref{zarembafor2}, because by Folding Lemma we would have $[\ldots,a_{last},\pm2,-a_{last},\ldots]$ in the middle, which is not reversible. This is due to the fact that there are numbers of different signs around $\pm2$ and hence by statement \eqref{remarkdiff3} of Remark \ref{remarkdiff} (or by Table \ref{Table:HensleyTable}) it is not reversible.
\end{remark}
Similarly, we provide a result for powers of $3$.
\begin{theorem}\label{zarembafor3}
If $m=3^k,k\geq1$, then there exists an integer $1\leq a<m$ with $\gcd(a,3)=1$, such that $K_\C(a/m) \leq 8$.
\end{theorem}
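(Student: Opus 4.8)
The proof will run in close parallel to that of Theorem~\ref{zarembafor2}: one constructs the HCF of a suitable $a/3^N$ by repeatedly applying the Folding Lemma (Lemma~\ref{folding}), starting from a finite list of explicitly computed base cases. The governing arithmetic constraint is that folding a rational with denominator $3^k$ by a multiplier $x$ yields a rational with denominator $x\cdot 3^{2k}$; to stay among the powers of $3$ one needs $x\in\{3,9,27,\dots\}$, and the only such $x$ with $|x|\le 8$ is $x=3$. Folding $r/3^k$ with $x=3$ produces
\[
\frac{r'}{3^{2k+1}}=[a_1,\ldots,a_{last},3,-a_{last},\ldots,-a_1]_\C ,
\]
which reaches only \emph{odd} target exponents. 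For an even target exponent $N=2k$ we are forced to take $x=1$; the Folding Lemma then gives $[a_1,\ldots,a_{last},1,-a_{last},\ldots,-a_1]$, which is not an HCF, but the identity~\eqref{foldingforx=1} rewrites it as the genuine HCF
\[
\frac{r'}{3^{2k}}=[a_1,\ldots,a_{last-1},a_{last}+1,a_{last}-1,a_{last-1},\ldots,a_1]_\C .
\]
This use of $x=1$ is the one genuinely new ingredient compared with the powers-of-$2$ case, where both $x=4$ and $x=8$ were available.

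The inductive invariant I would maintain is that there exists a reduced $a/3^k$ whose HCF consists of rational integers $a_1,\dots,a_{last}$ with $3\le|a_i|\le 8$ for every $i$ and with the stronger bound $4\le|a_1|,|a_{last}|\le 7$ at the two ends. Since $|a_i|\ge 3>\sqrt 8$, Proposition~\ref{Prop:FullCyls}\,\ref{Prop:FullCyls:01} (equivalently Remark~\ref{remarkdiff}\eqref{remarkdiff4}) guarantees that every word produced along the way is an honest — indeed full — HCF word, so no delicate validity analysis of the kind carried out in Section~\ref{validitycheck} is needed. The stronger endpoint bound $4\le|a_{last}|\le 7$ is exactly what the even step needs: it keeps $a_{last}\pm1$ within the range $3\le|a_{last}\pm1|\le 8$ and, crucially, prevents a partial quotient $\pm2$ from ever appearing, which would otherwise trigger one of the non-trivial restrictions of Table~\ref{Table:HensleyTable} on its neighbour $a_{last-1}$ (of uncontrolled sign) and could destroy validity. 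A short computation confirms that both steps send the invariant to itself — after the odd step the extreme partial quotients are $\pm a_1$, after the even step they are $a_1$, always with $|a_1|\in[4,7]$ — and that the new numerator is $\equiv\pm1\pmod 3$, hence coprime to $3$ and of absolute value $<3^N/2<3^N$, which takes care of the requirements $\gcd(a,3)=1$ and $1\le a<m$.

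The base of the induction is then supplied by listing explicitly, as in Theorem~\ref{zarembafor2}, the HCFs of a suitable $a/3^k$ for all $k$ up to a threshold chosen large enough that the halving step $N\mapsto\lfloor N/2\rfloor$ always lands either in a strictly smaller exponent handled by the induction or in a base case already satisfying the full invariant. Because the denominators $3^k$ grow faster than $2^k$, the HCFs are a little shorter and I expect this threshold to be comparable to, or slightly below, the value $k=13$ used for powers of $2$. For the few smallest exponents, where the listed HCF is too short or has an endpoint partial quotient of absolute value exactly $3$, one simply never feeds those cases into the halving step; the threshold is fixed so that $\lfloor N/2\rfloor$ is at least the first exponent whose listed HCF obeys $4\le|a_1|,|a_{last}|\le 7$.

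The main obstacle is, I expect, twofold and largely computational. First, one has to produce the finite table of base-case HCFs and verify by hand that from some exponent on they all satisfy the strengthened endpoint condition (not merely $|a_1|,|a_{last}|\ge3$); there is no a priori reason this must happen, and a handful of awkward small cases — analogous to those met for $-2\pm i$ — may need ad hoc treatment. Second, one must confirm that applying~\eqref{foldingforx=1} in the even step really does output an HCF of $r'/3^N$; this is precisely the point at which the endpoint invariant together with Proposition~\ref{Prop:FullCyls} is used. Once the base table is in hand and the invariant is checked to propagate, the induction closes in the same routine manner as for powers of $2$.
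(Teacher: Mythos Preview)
Your plan is correct and matches the paper's proof essentially step for step: the paper folds with $x=3$ for odd exponents and with $x=1$ (via identity~\eqref{foldingforx=1}) for even exponents, maintaining exactly your endpoint invariant $4\le|a_1|,|a_{last}|\le 7$ so that $a_{last}\pm1$ stays in $[3,8]$ and Remark~\ref{remarkdiff}\eqref{remarkdiff4} guarantees validity throughout. The only discrepancy is cosmetic: the paper's base table stops at $k=7$ (induction begins at $N=8$), well below your anticipated threshold of around $13$.
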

\begin{proof}
First, we see that
\begin{align*}
&\frac{1}{3^1}=[3]_\C, \quad \frac{4}{3^2}=[2,4]_\C, \quad \frac{10}{3^3}=[3,-3,-3]_\C, \quad
 \frac{19}{3^4}=[4,4,-5]_\C, \\ 
 & \frac{50}{3^5} = [5,-7,-7]_\C, \quad \frac{107}{3^6}=[7,-5,-3,7]_\C, \quad
\frac{323}{3^7}=[7,-4,-3,4,-7]_\C.
\end{align*}
Note that all fractions with denominators $3^k$ for $k=4,5,6,7$ listed above satisfy the condition of statement \eqref{remarkdiff4} of Remark \ref{remarkdiff} and so the reversed continued fraction is also a valid HCF.

Once again, we will use two inductive procedures: one for odd powers and one for even powers.

For $N=2k\geq8$ we will apply Lemma \ref{folding} with $x=1$ and $q_n=3^k$. Note that in this case $k\geq4$ and by the induction hypothesis there exists a reduced fraction $r/3^k$ with $K_\C(r/3^k)\leq 8$ and $4\leq |a_1(r/3^k)|,|a_{last}(r/3^k)|\leq 7$.

So applying Lemma \ref{folding}, we get that the right-hand side in \eqref{folding:eq} is a fraction with denominator $xq_n^2=1\cdot3^{2k}=3^N$, and by the identity \eqref{foldingforx=1} its Hurwitz continued fractions expansion is
\begin{equation}\label{explicit1}
\frac{r'}{3^N}=[a_1,\ldots,a_{last-1},a_{last}+1,a_{last}-1,a_{last-1},\ldots,a_1]_\C.
\end{equation}
We note that $a_1(r'/3^N)=a_{last}(r'/3^N) = a_1(r/3^k)$ and so
$4\leq |a_1(r'/3^N)|,|a_{last}(r'/3^N)|\leq 7$, so this property of $a_1(z)$ and $a_{last}(z)$ preserves under this procedure. Because of the statement \eqref{remarkdiff4} in Remark \ref{remarkdiff}, we see that this continued fraction is also reversible as a HCF. Also, note that from the explicit form of the continued fraction \eqref{explicit1}, it is clear that $K_\C (r'/3^N) = \max (K_\C (r/3^k), |a_{last}(r/3^k)|+1)\leq 8$.

For $N=2k+1\geq9$ we will apply Lemma \ref{folding} with $x=3$ and $q_n=3^k$. Note that in this case $k\geq4$ and by the induction hypothesis there exists a reduced fraction $r/3^k$ with $K_\C(r/3^k)\leq 8$ and $4\leq |a_1(r/3^k)|,|a_{last}(r/3^k)|\leq 7$.

So applying Lemma \ref{folding}, we get that the right-hand side in \eqref{folding:eq} is a fraction with denominator $xq_n^2=3\cdot3^{2k}=3^N$, and its Hurwitz continued fractions expansion is
\begin{equation}\label{explicit2}
\frac{r'}{3^N}=[a_1,\ldots,a_{last},3,-a_{last},\ldots,-a_1]_\C.
\end{equation}
We note that $a_1(r'/3^N)=-a_{last}(r'/3^N) = a_1(r/3^k)$ and so
$4\leq |a_1(r'/3^N)|,|a_{last}(r'/3^N)|\leq 7$, so this property of $a_1(z)$ and $a_{last}(z)$ preserves under this procedure. Because of the statement \eqref{remarkdiff4} in Remark \ref{remarkdiff}, we see that this continued fraction is also reversible as a HCF. Also, note that from the explicit form of the continued fraction \eqref{explicit2}, it is clear that $K_\C (r'/3^N) =  K_\C (r/3^k)\leq 8$.

Hence, continuing this procedure, we can get valid HCF expansions for denominators of the form $3^k,k\geq1$ with 
$K_\C (r/3^k)\leq 8$.
\end{proof}
For powers of $5$ we have a slightly better statement.
\begin{theorem}\label{zarembafor5}
    If $m=5^k,k\geq1$, then there exists an integer $1\leq a<m$ with $\gcd(a,5)=1$, such that $K_\C(a/m) \leq 7$.
\end{theorem}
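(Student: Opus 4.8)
The plan is to follow the inductive scheme of Theorems \ref{zarembafor2} and \ref{zarembafor3}: since $5^N$ is reached from $5^{\lfloor N/2\rfloor}$ by a single application of the Folding Lemma, with $x=1$ when $N$ is even and $x=5$ when $N$ is odd, and since both inserted values are benign ($|5|=5\le 7$, while $x=1$ is handled through the identity \eqref{foldingforx=1}), it suffices to start from a finite table of base cases and check that a suitable invariant propagates. So first I would record explicit HCFs $a_k/5^k=[a_1,\ldots,a_\ell]_\C$ for $k=1,\ldots,k_1$ (for instance $\tfrac{1}{5}=[5]_\C$, $\tfrac{6}{25}=[4,6]_\C$, and so on), chosen so that for all $k$ in a tail $k_0\le k\le k_1$ the word satisfies
\[
3\le |a_i|\le 7\ \ (1\le i\le \ell),\qquad 4\le |a_1|,\ |a_\ell|\le 6 .
\]
For real rationals whose partial quotients are integers, this invariant trivialises the validity bookkeeping: because $3>\sqrt8$, statement \ref{Prop:FullCyls:01} of Proposition \ref{Prop:FullCyls} shows that any word with all $|a_i|\ge 3$ is full, hence valid, and statement \eqref{remarkdiff4} of Remark \ref{remarkdiff} shows it is reversible. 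In particular, in contrast with the $(-2\pm i)$ case, where partial quotients of modulus $\sqrt5<\sqrt8$ force the delicate analysis of Section \ref{validitycheck}, here no separate validity argument is needed.

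Next I would run the induction on $N$. Suppose $r/5^k=[a_1,\ldots,a_\ell]_\C$ obeys the invariant, where $k=\lfloor N/2\rfloor$. If $N=2k$, apply Lemma \ref{folding} with $p_n/q_n=r/5^k$ and $x=1$; the denominator on the right of \eqref{folding:eq} is $1\cdot(5^k)^2=5^N$, and \eqref{foldingforx=1} rewrites the output as
\[
\frac{r'}{5^N}=[a_1,\ldots,a_{\ell-1},\,a_\ell+1,\,a_\ell-1,\,a_{\ell-1},\ldots,a_1]_\C .
\]
Since $4\le |a_\ell|\le 6$ we get $3\le |a_\ell\pm1|\le 7$; the new first and last partial quotients are again $a_1$; and no partial quotient has modulus exceeding $7$, so the invariant and the bound $K_\C\le 7$ persist. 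If $N=2k+1$, apply Lemma \ref{folding} with $p_n/q_n=r/5^k$ and $x=5$; the denominator is $5\cdot(5^k)^2=5^N$ and the output is
\[
\frac{r'}{5^N}=[a_1,\ldots,a_\ell,\,5,\,-a_\ell,\ldots,-a_1]_\C .
\]
Here $|5|=5\in[3,7]$, the new first and last partial quotients are $a_1$ and $-a_1$, and again everything stays of modulus between $3$ and $7$ (between $4$ and $6$ at the two ends), so the invariant and $K_\C\le 7$ persist. In both cases the new word has all partial quotients of modulus $\ge 3$, hence, as above, is a full reversible HCF, so the induction continues. Taking $k_1\ge 2k_0-1$ ensures that every $N>k_1$ has $k_0\le\lfloor N/2\rfloor<N$, so $5^{\lfloor N/2\rfloor}$ is already available with the invariant, and strong induction covers all $N\ge 1$.

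Finally, the cleanup: over the common denominator the Folding Lemma output is $r'/5^N$ with $r'=x\,5^k r+(-1)^\ell$, and any prime dividing $5^N$ divides $x\,5^k r$ but not $(-1)^\ell$, so $\gcd(r',5^N)=1$, the fraction is reduced with denominator exactly $5^N$, and $\gcd(r',5)=1$; replacing $r'$ by its residue in $[1,5^N)$ yields the required $a$ with $1\le a<m=5^N$, $\gcd(a,5)=1$, and $K_\C(a/m)\le 7$. I expect the main obstacle to be the very first step: one has to actually exhibit base cases whose \emph{first and last} partial quotients have modulus in $\{4,5,6\}$, not merely $\le 7$. It is precisely this tighter control at the two ends — which survives the $x=1$ fold as $a_\ell\pm1$ — that produces the bound $7$ rather than the bound $8$ obtained for powers of $2$ and $3$; checking that such base cases exist over a long enough initial segment of powers of $5$ is a finite but genuine computation, and should it fail for some small $k$ one would have to fall back on the finer real-number criterion in statement \eqref{remarkdiff3} of Remark \ref{remarkdiff} to control the partial quotient $2$ that would then appear after folding.
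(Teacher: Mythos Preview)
Your proposal is correct and is exactly the paper's approach: fold with $x=5$ for odd powers and $x=1$ (via \eqref{foldingforx=1}) for even powers, propagating the invariant $3\le|a_i|\le7$ together with $4\le|a_1|,|a_{\mathrm{last}}|\le6$, and appealing to Proposition~\ref{Prop:FullCyls}\ref{Prop:FullCyls:01} and Remark~\ref{remarkdiff}\eqref{remarkdiff4} for validity and reversibility. Your only unnecessary hedge is about the length of the base table: the two seeds $\tfrac{1}{5}=[5]_\C$ and $\tfrac{6}{25}=[4,6]_\C$ already satisfy the invariant, so one may take $k_0=1$, $k_1=2$ and no further computation is needed.
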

\begin{proof}
    We start with seed fractions
$$
\frac{1}{5}=[5]_\C, \quad \frac{6}{25}=[4,6]_\C
$$
and then proceed as in Theorem \ref{zarembafor3} with $x=5$ for odd powers and $x=1$ for even powers.
\end{proof}


\section{Verification of the validity of the sequences for powers of $(-2+i)$}\label{validitycheck}
We focus on $(-2+i)^4$, the other powers are handled similarly. In Section \ref{SEC:HCF}, we said that for a given valid sequence $\bfa=(a_1,\ldots, a_{n})$, $n\in\N$, and any $b\in\DD$ the validity of $\bfa b$ may depend on the entire sequence $\bfa$. Nevertheless, if $\mfFc_j(a_1,\ldots,a_{j+1})= \mfFc$ for some $j\in\{1,\ldots, n-1\}$, the values of $b$ making $\bfa b$ valid are independent of $(a_1,\ldots, a_{j})$. This observation---a trivial corollary of part \ref{Prop:ValidSeq:01} of Proposition \ref{Prop:ValidSeq} below---lies in the heart of our proof. The first point in Proposition \ref{Prop:ValidSeq} follows directly from the definition of cylinders and prototype sets (cfr. \cite[Lemma 2.3]{HeXiong2022}). Furthermore, since the functions $\tau_b$, for $b\in \DD$, and $\iota$ are homeomorphisms, we can replace $\mfF$ and $\CC$ with $\mfFc$ and $\CC^{\circ}$, respectively. The second point is contained in the proof of \cite[Proposition 4.5]{BugeaudGeroHussain2023}.
\begin{proposition}\label{Prop:ValidSeq}
Take $m\in\N$ and ${\bfx} =(x_1,\ldots, x_m)\in \Omega(m)$.
\begin{enumerate}[label=\normalfont(\arabic*)]
\item\label{Prop:ValidSeq:01} We have $\mfF_m( {\bfx})=\tau_{-x_m}\iota(\mfF_{m-1}(x_1,\ldots, x_{m-1})\cap \CC_1(x_m))$.
\item\label{Prop:ValidSeq:02} If $\mfF^{\circ}_m( {\bfx})=\mfF^{\circ}$, then $\mfF^{\circ}_m(\overleftarrow{-\bfx})=\mfF^{\circ}_m(\overline{x_1},\ldots, \overline{x_m})=\mfF^{\circ}$.
\end{enumerate}
\end{proposition}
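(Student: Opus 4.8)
The plan is to prove statement~\ref{Prop:ValidSeq:01} by unwinding the definition of a prototype set one symbol at a time, and statement~\ref{Prop:ValidSeq:02} by combining the symmetries of the complex Gauss map with the combinatorial description of full words. For statement~\ref{Prop:ValidSeq:01}, I would use that $a_j=a_1\circ T^{j-1}$: a point $z\in\mfF$ lies in $\CC_m(\bfx)$ precisely when $z\in\CC_{m-1}(x_1,\ldots,x_{m-1})$ and $T^{m-1}(z)\in\CC_1(x_m)$, i.e. $\CC_m(\bfx)=\CC_{m-1}(x_1,\ldots,x_{m-1})\cap (T^{m-1})^{-1}\!\big(\CC_1(x_m)\big)$. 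On each first--level cylinder $T$ agrees with the homeomorphism $\tau_{-x_j}\circ\iota$, so $T^{m-1}$ is injective on $\CC_{m-1}(x_1,\ldots,x_{m-1})$; applying it to the displayed equality gives $T^{m-1}\big(\CC_m(\bfx)\big)=\mfF_{m-1}(x_1,\ldots,x_{m-1})\cap\CC_1(x_m)$, and applying $T$ once more — which on the resulting set equals $\tau_{-x_m}\circ\iota$, since every point there has first partial quotient $x_m$ — yields $\mfF_m(\bfx)=T^m\big(\CC_m(\bfx)\big)=\tau_{-x_m}\iota\big(\mfF_{m-1}(x_1,\ldots,x_{m-1})\cap\CC_1(x_m)\big)$. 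As $\tau_{-x_m}$ and $\iota$ are homeomorphisms, one may pass to interiors throughout. This part is bookkeeping.

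For statement~\ref{Prop:ValidSeq:02} I would first treat the conjugated word. The map $\kappa(z)=\overline z$ fixes $\mfF^{\circ}$, commutes with $z\mapsto z^{-1}$, and satisfies $[\overline z]=\overline{[z]}$ on the interior of every cylinder, so $T\circ\kappa=\kappa\circ T$ and $a_j(\kappa(z))=\overline{a_j(z)}$ there; hence $\kappa$ carries $\mfF^{\circ}_m(x_1,\ldots,x_m)$ onto $\mfF^{\circ}_m(\overline{x_1},\ldots,\overline{x_m})$, which equals $\kappa(\mfF^{\circ})=\mfF^{\circ}$ when $\bfx$ is full. The same argument with the central symmetry $\nu(z)=-z$ (which also fixes $\mfF^{\circ}$, commutes with $T$, and carries $\CC^{\circ}_m(y_1,\ldots,y_m)$ to $\CC^{\circ}_m(-y_1,\ldots,-y_m)$) gives $\mfF^{\circ}_m(\overleftarrow{-\bfx})=\nu\big(\mfF^{\circ}_m(\overleftarrow{\bfx})\big)$, so $\overleftarrow{-\bfx}$ is full if and only if the plain reversal $\overleftarrow{\bfx}$ is full. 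Once $\overleftarrow{-\bfx}$ and $(\overline{x_1},\ldots,\overline{x_m})$ are both known to be full, all three prototype sets in the statement equal $\mfF^{\circ}$, which gives the asserted chain of equalities.

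What remains — and what I expect to be the one delicate step — is that reversal preserves fullness. In the range that is actually used in Section~\ref{validitycheck}, namely $\min_j|x_j|\geq\sqrt 8$, this is immediate from statement~\ref{Prop:FullCyls:01} of Proposition~\ref{Prop:FullCyls}, whose hypothesis is symmetric under reversing the word. For the general statement I would rely on the combinatorial picture obtained by iterating statement~\ref{Prop:ValidSeq:01}: that identity shows $\mfF^{\circ}_m(\bfx)$ is determined by which of the at most thirteen possible open prototype sets is reached as one reads $x_1,\ldots,x_m$ through the transition rules of Table~\ref{Table:HensleyTable}, and the content of \cite[Proposition~4.5]{BugeaudGeroHussain2023} is that this finite computation ends in the state $\mfF^{\circ}$ for $\bfx$ exactly when it does for $\overleftarrow{\bfx}$. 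Establishing this reversal symmetry of the ``fullness automaton'' is the crux; granting it, statement~\ref{Prop:ValidSeq:02} follows from the reductions above.
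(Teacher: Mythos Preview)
Your proposal is correct and matches the paper's treatment: for part~\ref{Prop:ValidSeq:01} the paper simply says it ``follows directly from the definition of cylinders and prototype sets'' (citing \cite[Lemma~2.3]{HeXiong2022}), which is exactly the bookkeeping you spell out, and for part~\ref{Prop:ValidSeq:02} the paper gives no argument at all but defers entirely to the proof of \cite[Proposition~4.5]{BugeaudGeroHussain2023}, the same reference you identify as the crux. Your symmetry reductions via $\kappa(z)=\overline z$ and $\nu(z)=-z$ are a clean way to isolate the reversal step, though strictly speaking the cited reference handles $\overleftarrow{-\bfx}$ and $(\overline{x_1},\ldots,\overline{x_m})$ directly, so those reductions are helpful scaffolding rather than logically required.
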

\begin{remark}
    Given $a,b\in \DD$, we may compute $\mfFc_2(a,c)$ using part \ref{Prop:ValidSeq:01} of Proposition \ref{Prop:ValidSeq} and $\iota(\CCc_1(b))=\iota(\mfF)\cap \tau_b(\mfF)$. 
\end{remark}
For each $ {\mathbf{x}}=(x_1,\ldots, x_{last})\in \DD^{+}$, let $|\mathbf{x}|$ be the length of $ {\mathbf{x}}$, and denote
\begin{align*}
& {\mathbf{x}}^+:=(x_1,\ldots, x_{last - 1}, x_{last} + 1), \\
& {\mathbf{x}}_{-} :=(x_1 -1, x_2\ldots, x_{last}), \\
&\overleftarrow{ \mathbf{x}}_{-} := \left(\overleftarrow{\mathbf{x}} \right)_{-}.
\end{align*}
Consider $d=-2+i$, $ f(\mathbf{x}):={f_{d}(\mathbf{x})}$ and $g(\mathbf{x}) := {\mathbf{x}}^{+}\overleftarrow{ \mathbf{x}}_{-}$. For $h_1,\ldots, h_n\in\{f,g\}$, we write $h_n\cdots h_1:=h_n\circ \cdots \circ h_1$. Let $ {\bfa}=(a_1,a_2,a_3)$ be the partial quotients of $(5-6i)/(-2+i)^4$; that is
\[
 {\bfa} := (2 - 3i, -1 - 2i,-3+ i).
\]
From Table \ref{Table:HensleyTable} we get $\mfFc_3(\overleftarrow{\bfa})=\mfFc$ and we can easily check that $\mfFc_3(\overleftarrow{\bfa} )=\mfFc$. Consider $ {\bfc}=(c_1,\ldots, c_7)$ given by
\[
 {\bfc}
:=
f( {\bfa})
=
(2 - 3i, -1 - 2i, -3 + i,
-2 + i,
3- i, 1 + 2i, -2 + 3i),
\]
From $\mfFc_3(2 - 3i, -1 - 2i, -3 + i) = \mfFc$, $\mfFc_2(-2+i, 3-i)= \mfFc$, and $\mfFc_2(1+2i, -2 + 3i)=\mfFc$ we obtain $\mfFc_{|f( {\bfa})|}({f( {\bfa})})=\mfFc$. Indeed, using part \ref{Prop:ValidSeq:01} of Proposition \ref{Prop:ValidSeq}, we arrive at
\begin{align*}
\mfFc_4(c_1,c_2,c_3,c_4)
&=
\tau_{-c_4}\iota \left( \mfFc_3(c_1,c_2,c_3)\cap \CCc_1(c_4)\right) \\
&=
\tau_{-c_4}\iota \left( \mfFc_3(2 - 3i, -1 - 2i, -3 + i)\cap \CCc_1(-2+i)\right)
=
\mfFc_1(-2+i),
\end{align*}
hence
\begin{align*}
\mfFc_5(c_1,c_2,c_3,c_4,c_5)
&=
\tau_{-c_5}\iota \left( \mfFc_4(c_1,c_2,c_3, c_4)\cap \CCc_1(c_5)\right) \\
&=
\tau_{-c_5}\iota \left( \mfFc_1(-2+i) \cap \CCc_1(3-i)\right)
=
\mfFc_2(-2+i, 3-i)
=
\mfFc
\end{align*}
and
\begin{align*}
\mfFc_7(\bfc)
&=
\tau_{-c_7}\iota \left( \mfFc_6(c_1,\ldots, c_6)\cap \CCc_1(c_7)\right) \\
&=
\tau_{-c_7}\iota \left( \tau_{-c_6}\iota\left( \mfFc_5(c_1,\ldots, c_5)\cap \CCc_1(c_6)\right)\cap \CCc_1(c_7)\right) \\
&=
\tau_{-c_7}\iota \left( \mfFc_1(c_6) \cap \CCc_1(c_7)\right)  = 
\mfF_2(c_6,c_7) = \mfFc_2(1+2i, -2 + 3i)=\mfFc.
\end{align*}
Similarly, $\mfFc_7\left(\overleftarrow{\bfc}\right)=\mfFc$ is implied by 
\[
\mfFc_3(-3+i, -1-2i, 2-3i)= \mfFc, \quad
\mfFc_2(-2 + i, -3 + i) = \mfFc, \quad
\mfFc_2(-1 - 2i, 2 - 3i) = \mfFc. 
\]
We may show $\mfFc_6( {g(\bfa)})=\mfFc_6(2-3i, -1-2i, -2+i, -4+i, -1-2i,  2-3i)=\mfFc$ using
\[
\mfFc_2(2-3i, -1-2i)=\mfFc(-1-2i),\quad
\mfFc_2(-1-2i, -2+i)=\mfFc(-2 + i),
\]
\[
\mfFc_2(-2+i, -4+i)=\mfFc, \quad
\mfFc_2(-1-2i,  2-3i)=\mfFc.
\]
Finally, $\mfFc_6(\overleftarrow{g(\bfa)})=\mfFc$ is a consequence of
\[
\mfFc_2(2-3i, -1-2i)=\mfFc_1(-1-2i), \quad
\mfFc_2(-1-2i,-4+i)=\mfFc, \quad
\]
\[
\mfFc_2(-2+i,-1-2i) = \mfFc_1(-1-2i), \quad
\mfFc_2(-1-2i,2-3i)=\mfFc.
\]
Let $N\in\N_{\geq 2}$. For $j\in\{1,\ldots, N\}$ and $\bfh=(h_1,\ldots, h_j)\in\{f,g\}^j$, put $\bfh( {\bfa}):=h_j\cdots h_1(\bfa)$ and $n(\bfh)=|\bfh( {\bfa})|$. Assume that
\begin{equation}\label{Eq:IndHyp}
\forall j\in\{1,\ldots, N\}
\quad
\forall \bfh\in \{f,g\}^j
\quad
\mfFc_{n(\bfh)}\left(\bfh( {\bfa})\right)
=
\mfFc
\quad\text{ and }\quad
\mfFc_{n(\bfh)}\left( \overleftarrow{-\bfh(\bfa)} \right)
=\mfFc.
\end{equation}
Take any $\bfh=(h_1,\ldots, h_N)\in \{f,g\}^N$ and $h\in \{f,g\}$. Let us show that \eqref{Eq:IndHyp} will also hold for $N+1$. Note that the equality for $\overleftarrow{\bfh(\bfa)}$ is equivalent to the equality for $\overleftarrow{-\bfh( {\bfa})}$ by part \ref{Prop:ValidSeq:02} of Proposition \ref{Prop:ValidSeq}.

For the rest of the paper, we write $ {\bfb}=h_{N-1}\cdots h_1( {\bfa})$. We consider four cases.
\begin{enumerate}[label=\normalfont(\arabic*)]
\item Case $(h_N,h) = (f,f)$. Consider $ {\bfc} := {ff(\bfb)} = \bfb \, d \, \overleftarrow{-\bfb} \, d \, \bfb \, (-d) \, \overleftarrow{-\bfb}$. By \eqref{Eq:IndHyp}, we have 
\[
\mfFc_{n(\bfh )}\left(f(\bfb)\right)
= 
\mfFc
\quad\text{ and }\quad
\mfFc_{n(\bfh) } \left( \overleftarrow{-f(\bfb)} \right)
=
\mfFc.
\]
The first term of $ {\bfb}$ is $a_1$ and, since $\mfFc_2(d,a_1)=\mfFc_2(-2+i, 2-3i)=\mfFc$, we conclude from statement \ref{Prop:ValidSeq:01} of Proposition \ref{Prop:ValidSeq}. that $\mfF_{n(\bfh f) }({\bfh f (\bfa)})=\mfFc$. In order to prove $\mfFc_{n(\bfh f)}(\overleftarrow{-\bfc})=\mfFc$, we note that $\overleftarrow{-\bfc} =  {\bfb}\, d\, \overleftarrow{-\bfb}\, (-d)\,  {\bfb}\, (-d)\, \overleftarrow{-\bfb}$, that $\mfFc_2(-d,a_1)=\mfFc_2(2-i,2-3i)=\mfFc$, and then argue as we did for $\bfc$.

\item Case $(h_N,h) = (g,f)$. Consider $ {\bfc}:= fg(\bfb) = g(\bfb)\,d\, \overleftarrow{-g(\bfb)}$. In order get $\mfFc_{n(\bfh f)}(\bfh f(\bfa))=\mfFc$, first observe that
\begin{equation}\label{Eq:Case:gf}
\mfFc_{n(\bfh)}(g(\bfb))=\mfFc, 
\quad
\mfFc_{n(\bfh)}\left( \overleftarrow{-g(\bfb)}\right)=\mfFc,
\end{equation}
 by \eqref{Eq:IndHyp}. Since the last letter of $\overleftarrow{\bfb}_{-}$ is $a_1$, the first letter of $\overleftarrow{( {\bfb}^+ \overleftarrow{\bfb}_{-} )}_{-} $ is $-a_1$ and the desired equality follows from \eqref{Eq:Case:gf} and $\mfFc_2(d,-a_1)=\mfFc_2(-2+i,-2+3i)=\mfFc$. The same observations and $\mfFc_2(-d,-a_1)=\mfFc_2(2-i,-2+3i)=\mfFc$ imply $\mfFc_{n(\bfh f)} (\overleftarrow{-\bfc } ) = \mfFc$.

\item Case $(h_N,h) = (f,g)$. Write $\bfc:=gf(\bfb)$, so
\[
\bfc
=
f(\bfb)^{+}\, \overleftarrow{f(\bfb)}_{-}.
\]
Therefore, in order to show $\mfFc(\bfc)=\mfFc$ it suffices to verify
\begin{equation}\label{EQ:Case:fg}
    \mfFc_{n(\bfh)}(f(\bfb)^{+})=\mfFc
    \quad\text{ and }\quad
    \mfFc_{n(\bfh)}(\overleftarrow{f(\bfb)}_{-})=\mfFc.
\end{equation}

By \eqref{Eq:IndHyp}, we have $\mfFc_{n(\bfh)}(f(\bfb))=\mfFc$; hence, by statement \ref{Prop:ValidSeq:01} of Proposition \ref{Prop:ValidSeq},
\[
\mfFc_{2n(\bfh)}(\bfb \, d \, (-b_{last},\ldots, -b_{2})) \neq \varnothing
\; \text{ and }\;
\mfFc_{2n(\bfh)-1}(\bfb \, d \, (-b_{last},\ldots, -b_{3}, )) \neq \varnothing.
\]
Since the last terms of $f(\bfb)$ are $(-b_3, -b_2, -b_1)=(-a_3, -a_2, -a_1)$, the last terms of $f(\bfb)^+$ are $(-a_3, -a_2, -a_1+1)$. Then, in the view of $-a_3= 3-i$, we have
\[
\mfFc_{2n(\bfh)-1}(\bfb \, d \, (-b_{last},\ldots, -b_{3})) = \mfFc,
\quad\text{thus, }\quad
\mfFc_{2n(\bfh)}(\bfb \, d \, (-b_{last},\ldots, -b_{2})) = \mfFc_1(-b_2).
\]
Lastly, because of $\mfFc_2(-a_2,-a_1+1)=\mfFc_2(1+2i,-1+3i)=\mfFc$, we may conclude $\mfFc_{n(\bfh)}(f(\bfb)^{+})=\mfFc$. The second equality in \eqref{Eq:Case:gf} follows from a similar observations and
\[
\mfFc_{n(\bfh)}(-b_1,\ldots, -b_{last-1}, -b_{last}+1)
=
\mfFc.
\]
Consider $\mathbf{d}= \overleftarrow{f(\bfb)}_{-}$. We have $\mfFc_{n(\bfh)}(\mathbf{d})=\mfFc$ from $-b_1-1=-a_1-1$, $\mfFc_1(-a_1) = \mfFc_1(-a_1-1)=\mfFc$ and statement \ref{Prop:ValidSeq:02} of Proposition \ref{Prop:ValidSeq}. The same observations and \eqref{Eq:IndHyp} yield $\mfFc_{n(\bfh)}(\overleftarrow{\mathbf{d}})=\mfFc$.

\item Case $(h_N,h) = (g,g)$. Put $\bfc := gg(\bfb)$. As before, it suffices to show
\[
\mfFc_{n(\bfh)}\left(g(\bfb)^{+}\right)
=
\mfFc_{n(\bfh)}\left(\overleftarrow{g(\bfb)^{+}}\right)
=
\mfFc
\quad\text{ and }\quad
\mfFc_{n(\bfh)}\left( \overleftarrow{g(\bfb)}_{-} \right)
=
\mfFc_{n(\bfh)}\left(\overleftarrow{\overleftarrow{g(\bfb)}_{-}} \right)
=
\mfFc.
\]
These identities are proven as in the previous case. 

This shows \eqref{Eq:IndHyp} for $N+1$. Therefore, if $\bfc=(c_1,\ldots, c_{last})=\bfh(\bfa)$ for some $\bfh\in\{f,g\}^{n}$, $n\in\N$, then
\[
\iota\tau_{c_{last}} \cdots \iota\tau_{c_1}(0)
=
[c_1,c_2,\ldots, c_{last} ]\in \CCc_{n(\bfh)}(\bfc) \subseteq \mfFc
\]
and $\bfc$ is valid.
\end{enumerate}

\end{document}